\DeclareMathOperator{\inp}{in}
\DeclareMathOperator{\out}{out}
\DeclareMathOperator{\id}{Id}
\theoremstyle{plain}
\newtheorem{theorem}{Theorem}[section]
\newtheorem{lemma}[theorem]{Lemma}
\newtheorem{proposition}[theorem]{Proposition}
\newtheorem{corollary}[theorem]{Corollary}
\newtheorem{conjecture}[theorem]{Conjecture}
\theoremstyle{definition}
\newtheorem{definition}[theorem]{Definition}
\newtheorem{example}[theorem]{Example}
\theoremstyle{remark}
\newtheorem{remark}[theorem]{Remark}
\title{Spectral theory of the non-backtracking Laplacian for graphs}
\author[1,2]{Jürgen Jost}
\author[1,3]{Raffaella Mulas\thanks{r.mulas@vu.nl}}
\author[1]{Leo Torres\thanks{leo@leotrs.com}}
\affil[1]{Max Planck Institute for Mathematics in the Sciences, Leipzig, Germany}
\affil[2]{Santa Fe Institute for the Sciences of Complexity, Santa Fe, New Mexico, USA}
\affil[3]{Vrije Universiteit Amsterdam, Amsterdam, The Netherlands}
\date{}
\begin{document}

\maketitle

\begin{abstract}We introduce a non-backtracking Laplace operator for graphs and we investigate its spectral properties. With the use of both theoretical and computational techniques, we show that the spectrum of this operator captures several structural properties of the graph in a more precise way than the classical operators that have been studied so far in the literature, including the non-backtracking matrix.

\vspace{0.2cm}
\noindent {\bf Keywords:} Non-backtracking random walks; spectral graph theory; non-backtracking matrix; Laplacian eigenvalues
\end{abstract}

\section{Introduction}
 
Spectral graph theory is the study of the structural properties of a graph that can be inferred by the \emph{spectrum}, i.e., the multiset of the eigenvalues, of an operator associated with it. Classically, for a simple graph $G=(V,E)$ on $N$ nodes and $M$ edges, three $N \times N$ matrices are considered in the context of spectral theory: 
\begin{itemize}
    \item The \emph{adjacency matrix} $A$,
    \item The \emph{Kirchhoff Laplacian} $K \coloneqq D-A$, where $D$ denotes the diagonal degree matrix of $G$, and
    \item The \emph{random walk Laplacian} $L\coloneqq\id-D^{-1}A$, where $\id$ denotes the $N \times N$ identity matrix.
\end{itemize}
There is a vast literature on the spectral theory of all of these matrices (we refer to \cite{chung} and \cite{brouwer} for two monographs), as well as on its applications to data analysis and dynamical systems. The advantage of studying their eigenvalues is that, with little computational effort, one can infer structural properties of the graph. Moreover, since they all have real eigenvalues, the study of their theoretical results is easier than that of other operators.\newline

A common alternative to the above matrices, which does not necessarily have real eigenvalues but has nevertheless several advantages and has many properties in common with symmetric matrices, is the $2M\times 2M$ \emph{non-backtracking matrix} of $G$. It is defined as follows. Fix an arbitrary \emph{orientation} for each edge of $G$, i.e., given $e \in E$, let one of its endpoints be its \emph{input}, denoted by $\inp(e)$, and the other one be its \emph{output}, denoted by $\out(e)$, with respect to this orientation. Let then $e_1,\ldots,e_M$ denote the edges with this fixed orientation, and let $$e_{M+1}\coloneqq e^{-1}_1,\ldots,e_{2M}\coloneqq e_M^{-1}$$
denote the edges with the inverse orientation. Let then $B$ be the $2M\times 2M$ matrix with $(0,1)$--entries such that
\begin{equation*}
    B_{ij}=1\iff \out(e_i)=\inp(e_j) \text{ and }  \inp(e_i)\neq\out(e_j).
\end{equation*}
The \emph{non-backtracking matrix} of $G$ is $B^\top$, the transpose matrix of $B$. This matrix was first introduced by Hashimoto \cite{hashimoto1989zeta}, and its relationship with the theory of graph-theoretical zeta functions is well-known \cite{terras2010zeta}, as are its applications to spectral clustering \cite{bordenave2018nonbacktracking,krzakala2013spectral}, node centrality \cite{arrigo2018non,martin2014localization,torres2021nonbacktracking}, and spreading dynamics and percolation on graphs \cite{castellano2018relevance,shrestha2015message}, to name a few.
\newline

Here we introduce a new operator that, as we shall see, has many advantages in the context of spectral theory. We construct it as follows. We first define the \emph{non-backtracking graph} $\mathcal{NB}(G)$ of $G$ as the directed graph on $2M$ nodes whose adjacency matrix is $B$. We let $\mathcal{L}$ be the random walk Laplacian of $\mathcal{NB}(G)$, and we call it the \emph{non-backtracking Laplacian} of $G$.\newline

The paper is structured as follows. In Section \ref{section:basic} we give the basic definitions and we fix the main notations. In Section \ref{section:firstp} we prove some first properties on non-backtracking random walks, the non-backtracking graph, and the non-backtracking Laplacian. In Section \ref{section:gap} we prove that the spectral gap from $1$ for the non-backtracking Laplacian of a graph $G$ is always bounded below by $1/(\Delta-1)$, where $\Delta$ is the largest vertex degree in the graph, and we show that this bound is sharp. In Section \ref{section:cycles} we investigate the signature that the presence of cycles in the graph can leave in the spectrum of the non-backtracking Laplacian, and finally, in Section \ref{section:cospectrality}, we discuss cospectrality of graphs with respect to several operators. Our computations for graphs with small number of nodes suggest, in particular, that the non-backtracking Laplacian has nicer cospectrality properties than all other operators.\newline

A remark on \emph{notation}: The objects associated to a simple graph $G$ are denoted by standard capital letters, and the objects associated to a directed graph $\mathcal{G}$ are denoted by the corresponding calligraphic letters. Thus, for instance, $V$ is the vertex set of $G$, and $\mathcal{V}$ is the vertex set of $\mathcal{G}$. Moreover, we shall often use $\mathcal{G}$, instead of $\mathcal{NB}(G)$, to denote the non-backtracking Laplacian of $G$.

\section{Basic definitions and notations}\label{section:basic}
Throughout the paper we fix a simple graph $G=(V,E)$ on $N$ nodes and $M$ edges. If two vertices $v,w\in V$ are connected by an edge, we write $v\sim w$, or equivalently $w\sim v$, and we denote such edge by $(v,w)$, or equivalently by $(w,v)$.
\begin{definition}The \emph{degree} of a vertex $v$, denoted $\deg_G v$ (or simply $\deg v$, if the graph is clear by the context) is the number of edges in which it is contained. The \emph{degree matrix} of $G$ is the $N\times N$ diagonal matrix $D \coloneqq D(G) \coloneqq (D_{vw})_{v,w\in \mathcal{V}}$ whose diagonal entries are 
\begin{equation*}
    D_{vv}\coloneqq\deg v.
\end{equation*}
The \emph{adjacency matrix} of $G$ is the $N\times N$ matrix $A \coloneqq A(G) \coloneqq (A_{vw})_{v,w\in V}$ defined by
\begin{equation*}
    A_{vw} \coloneqq \begin{cases}1 & \text{if }v\sim w\\
    0 & \text{otherwise}.
    \end{cases}
\end{equation*}
\end{definition}

We assume, from here on, that $G$ has minimum degree $\geq 2$. The fact that there are no vertices of degree $0$ allows us to give the following definition:

\begin{definition}
A \emph{random walk} on $G$ is a discrete-time Markov process on $V$ such that the probability of going from a vertex $v$ to a vertex $w$ is
\begin{equation*}
    \mathbb{P}(v\rightarrow w)=\begin{cases}\frac{1}{\deg v} & \text{if }v\sim w\\
    0 & \text{otherwise.}
    \end{cases}
\end{equation*}
\end{definition}

\begin{definition}
The \emph{random walk Laplacian} of $G$ is the $N\times N$ matrix
\begin{equation*}
   L(G)\coloneqq\id-D^{-1}A,
\end{equation*}where $\id$ denotes the $N\times N$ identity matrix.
\end{definition}
\begin{remark} Since for two distinct vertices $v,w\in V$, the entry $(D^{-1}A)_{vw}$ of the rescaled adjacency matrix is precisely the probability of going from $v$ to $w$ with a random walk on $G$. the operator $L(G)$ is the generator of the random walk of $G$.
\end{remark}

Choosing an \emph{orientation} for an edge means letting one of its endpoints be its \emph{input} and the other one be its \emph{output}. We let $e=[v, w]$ denote the oriented edge whose input is $v$ and whose output is $w$. In this case, we write $\inp(e) \coloneqq v$ and $\out(e) \coloneqq w$. Moreover, we let $e^{-1} \coloneqq [w, v]$.\newline
From now on, we fix an orientation for each edge of $G$. We let $e_1,\ldots,e_M$ denote the edges of $G$ with this fixed orientation and we let 
$$e_{M+1} \coloneqq e^{-1}_1,\ldots,e_{2M} \coloneqq e_M^{-1}$$
denote the edges with the inverse orientation.\newline

The assumption that $G$ has minimum degree $\geq 2$ enables us to formulate the following definition:

\begin{definition}
A \emph{non-backtracking random walk} on $G$ is a discrete-time Markov process on the oriented edges such that the probability of going from $e_i$ to $e_j$ is
\begin{equation*}
    \mathbb{P}(e_i\rightarrow e_j)=\begin{cases}\frac{1}{\deg (\out(e_i))-1} & \text{if }\out(e_i)=\inp(e_j) \text{ and }  \inp(e_i)\neq\out(e_j)\\
    0 & \text{otherwise.}
    \end{cases}
\end{equation*}

\end{definition}

Equivalently, a non-backtracking random walk on $G=(V,E)$ can also be seen as a process on $V$, in which the probability that a random walker goes from a vertex $v$ to a vertex $w$ depends on where she was before arriving at $v$. However, this process is not Markovian, which is why it is convenient to study it from the point of view of the oriented edges.

\begin{definition}
The matrix $B\coloneqq B(G)$ is the $2M\times 2M$ matrix with $(0,1)$--entries such that
\begin{equation*}
    B_{ij}=1\iff \out(e_i)=\inp(e_j) \text{ and }  \inp(e_i)\neq\out(e_j).
\end{equation*}
The \emph{non-backtracking matrix} of $G$ is $B^\top$, the transpose matrix of $B$.
\end{definition}

From now on, we also fix a directed graph $\mathcal{G}=(\mathcal{V},\mathcal{E})$ on $\mathcal{N}$ nodes and $\mathcal{M}$ edges that has no vertices of outdegree $0$. If $\mathcal{G}$ has an edge from a vertex $v$ to a vertex $w$, we write $v\rightarrow w$ and we denote such an edge by $(v\rightarrow w)$.
\begin{remark}
Note that, although both oriented edges and directed edges are defined as ordered pairs of vertices, these two definitions are conceptually different. In fact, while directions are intrinsic of the chosen graph, orientations are not. This is what motivates us to use two different notations for oriented and directed edges.
\end{remark}

\begin{definition}
The \emph{degree} of a vertex $v$ is
\begin{equation*}
   \deg v\coloneqq \deg_{\mathcal{G}} v\coloneqq|\{w\in \mathcal{V}:(v\rightarrow w)\in \mathcal{E}\}|.
\end{equation*}The \emph{degree matrix} of $\mathcal{G}$ is the $\mathcal{N}\times \mathcal{N}$ diagonal matrix $\mathcal{D} \coloneqq \mathcal{D}(\mathcal{G}) \coloneqq (\mathcal{D}_{vw})_{v,w\in \mathcal{V}}$ whose diagonal entries are 
\begin{equation*}
    \mathcal{D}_{vv}\coloneqq\deg v.
\end{equation*}
The \emph{adjacency matrix} of $\mathcal{G}$ is the $\mathcal{N}\times \mathcal{N}$ matrix $\mathcal{A}\coloneqq\mathcal{A}(\mathcal{G})\coloneqq(\mathcal{A}_{vw})_{v,w\in \mathcal{V}}$ defined by
\begin{equation*}
    \mathcal{A}_{vw}\coloneqq\begin{cases}1 & \text{if }v\rightarrow w\\
    0 & \text{otherwise}.
    \end{cases}
\end{equation*}
\end{definition}
\begin{remark}
Note that the degree counts the number of outgoing edges from a node, and therefore it is also often called the outdegree.
\end{remark}
\begin{definition}
A \emph{random walk} on $\mathcal{G}$ is a discrete-time Markov process on $\mathcal{V}$ such that the probability of going from a vertex $v$ to a vertex $w$ is
\begin{equation*}
    \mathbb{P}(v\rightarrow w)=\begin{cases}\frac{1}{\deg v} & \text{if }v\rightarrow w\\
    0 & \text{otherwise.}
    \end{cases}
\end{equation*}
\end{definition}

\begin{definition}
The \emph{random walk Laplacian} of $\mathcal{G}$ is the $\mathcal{N}\times \mathcal{N}$ matrix
\begin{equation*}
   \mathcal{L}(\mathcal{G})\coloneqq\id-\mathcal{D}^{-1}\mathcal{A}.
\end{equation*}
\end{definition}

\begin{remark} As in the undirected case, the  random walk Laplacian of $\mathcal{G}$ is the generator of the random walk on $\mathcal{G}$.
\end{remark}

\begin{definition}
The \emph{non-backtracking graph} of $G$ is the directed graph $\mathcal{NB}(G)$ on vertices $e_1,\ldots,e_{2M}$, that has $B$ as adjacency matrix.
\end{definition}

The non-backtracking graph $\mathcal{NB}(G)$ has been studied before in the literature, for instance in \cite{Kempton2016} and \cite{fasino2021hitting}, where it is called the \emph{Hashimoto graph}.

\begin{example}\label{ex:cycle}
If $G$ is the cycle graph on $N$ nodes, then $\mathcal{NB}(G)$ is given by two disconnected directed cycles on $N$ nodes (Figure \ref{fig:cycles1}).
\end{example}

\begin{figure}[h]
    \centering
    \includegraphics[width=12cm]{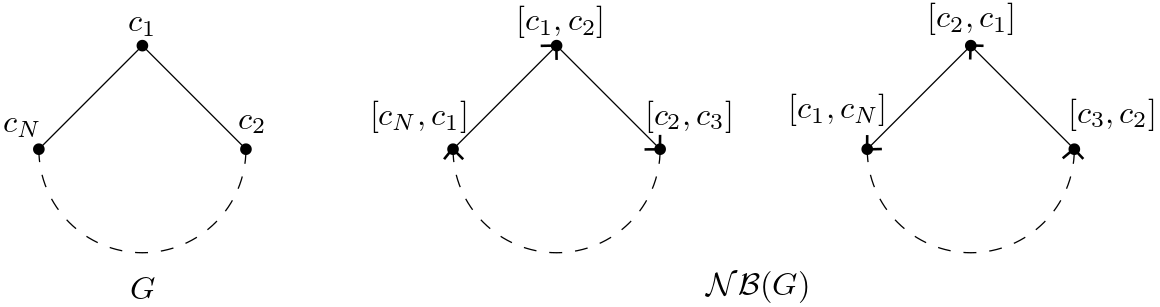}
    \caption{The cycle graph $G$ and its non-backtracking graph $\mathcal{NB}(G)$.}
    \label{fig:cycles1}
\end{figure}

Clearly, a random walk on the directed graph $\mathcal{NB}(G)$ is equivalent to a non-backtracking random walk on $G$. Moreover, for each oriented edge $e_i$,
\begin{equation*}
    \deg_{\mathcal{G}} e_i=\deg_G(\out(e_i))-1.
\end{equation*}In particular, the assumption that $G$ has minimum degree $\geq 2$ implies that $\mathcal{NB}(G)$ has minimum degree $\geq 1$. Hence, the random walk Laplacian of $\mathcal{NB}(G)$ is well-defined.\newline Also, as a consequence of the above equality, we have that $G$ is $k+1$--\emph{regular} (meaning that all its vertices have constant degree $k+1$) if and only if $\mathcal{NB}(G)$ is $k$--regular. Similarly, $G$ is bipartite if and only if $\mathcal{NB}(G)$ is bipartite, since $G$ has odd-length cycles if and only if $\mathcal{NB}(G)$ has odd-length cycles.

\begin{definition}
The \emph{non-backtracking Laplacian} of $G$, denoted by $\mathcal{L}\coloneqq\mathcal{L}(G)$, is the random walk Laplacian $\mathcal{L}(\mathcal{NB}(G))$ of $\mathcal{NB}(G)$.
\end{definition}

Note that the non-backtracking Laplacian of $G$ can be written as $\id-\mathcal{T}(G)$, where $\mathcal{T}(G)$ is the transition matrix of the non-backtracking process. This matrix is spectrally equivalent to $\mathcal{L}(G)$ (in fact, their eigenvalues are the same, up to a translation) and it has been considered in \cite{fasino2021hitting}. However, in \cite{fasino2021hitting}, the spectral properties of $\mathcal{T}(G)$ (and therefore also of $\mathcal{L}(G)$) have not been investigated. Moreover, to the best of our knowledge, the above definition of $\mathcal{L}(G)$ has not been considered before in the literature. In particular, this operator is never symmetric, therefore it does not coincide with the symmetric Laplacian that has been studied in \cite{Alon2007,Kempton2016} for non-backtracking random walks.

\section{First properties}\label{section:firstp}


\subsection{Non-backtracking walks and graph}

We now investigate the first properties of the non-backtracking walks and non-backtracking graph of a simple graph. We start with the following observation.
\begin{remark}
As anticipated in the previous section, although we choose to see a non-backtracking walk on $G=(V,E)$ as a process on the oriented edges, this can be equivalently seen as a process on $V$. However, in this case the process is not Markovian, implying that it is much more complicated. To see this, we consider a non-backtracking walk which is seen as a process on $V$, and we let $\mathbb{P}_n(v_0 \rightarrow v_n)$ denote the probability of going from vertex $v_0$ to vertex $v_n$ with a non-backtracking random walk of length $n$ which is starting at $v_0$. In this case,
\begin{equation*}
      \mathbb{P}_1(v_0 \rightarrow v_1)=\frac{A_{v_0v_1}}{\deg v_0};
\end{equation*}
\begin{equation*}     
    \mathbb{P}_2(v_0 \rightarrow v_2)=\begin{cases}\sum_{v_1\in N(v_0)\cap N(v_2)}\frac{1}{\deg v_0}\cdot \frac{1}{\deg v_1-1} & \text{if }v_0\neq v_2,\\
    0 & \text{if }v_0=v_2;
    \end{cases}
\end{equation*}
\begin{align*}
      \mathbb{P}_3(v_0 \rightarrow v_3)=\sum\frac{1}{\deg v_0-A_{v_0v_3}}\cdot \frac{1}{|N(v_1)\cap N(v_3)|-A_{v_3v_0}}\cdot \frac{1}{\deg v_2-1},
      \end{align*}where the sum is over $v_1\in N(v_0)\setminus \{v_3\}$, $v_2 \in N(v_1)\cap N(v_3)\setminus\{v_0\}$;
      \begin{align*}
       \mathbb{P}_4(v_0 \rightarrow v_4)=\sum &\frac{1}{\deg v_0}\cdot \frac{1}{\deg v_1-1-A_{v_1v_4}}\cdot \frac{1}{| N(v_2)\cap N(v_4)|-A_{v_4v_1}}\\ &\cdot \frac{1}{\deg v_3-1},
\end{align*}where the sum is over
\begin{align*}
    &v_1\in N(v_0)\\ 
    &v_2 \in N(v_1)\setminus\{v_0,v_4\}\\ 
    &v_3\in N(v_2)\cap N(v_4)\setminus \{v_1\}.
\end{align*}
 Moreover, for $n\geq 5$,
\begin{align*}
 \mathbb{P}_n(v_0 \rightarrow v_n)= \sum
  &\frac{1}{\deg v_0}\cdot \prod_{i=2}^{n-3}\frac{1}{\deg v_{i-1}-1}\cdot \frac{1}{\deg v_{n-3}-1-A_{v_{n-3}v_n}}\\
  & \cdot\frac{1}{|N(v_{n-2})\cap N(v_n)|-A_{v_{n-3}v_n}}\cdot \frac{1}{\deg v_{n-1}-1},
\end{align*}where the above sum is over
\begin{align*}
    &v_1\in N(v_0)\\ &v_i\in N(v_{i-1})\setminus\{v_{i-2}\} \text{ for }i=2,\ldots,n-3\\ &v_{n-2}\in N(v_{n-3})\setminus \{v_{n-4},v_n\}\\ &v_{n-1}\in N(v_{n-2})\cap N(v_n)\setminus \{v_{n-3}\}.
\end{align*}Hence, despite having a simple description, a non-backtracking walk on $G$ is highly complex, if seen as a process on $V$.
\end{remark}

We now investigate some structural properties of the non-backtracking graph of $G=(V,E)$. The first one is the following.

\begin{lemma}
Let $G=(V,E)$ be a simple graph on $N$ nodes and $M$ edges, with minimum degree $\geq 2$, and let $\mathcal{G}=(\mathcal{V},\mathcal{E})$ be its non-backtracking graph. Then, $\mathcal{G}$ has $2M$ nodes and $\sum_{v\in V}(\deg_{G}v)^2-2M$ edges.
\end{lemma}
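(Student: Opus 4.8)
The statement has two parts. The vertex count is immediate: by construction the vertices of $\mathcal{G}=\mathcal{NB}(G)$ are exactly the $2M$ oriented edges $e_1,\ldots,e_{2M}$, so $|\mathcal{V}|=2M$. The plan for the edge count is to compute the total number of edges as the sum of the out-degrees over all vertices of $\mathcal{G}$, and then to exploit the out-degree formula already recorded in the excerpt.

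First I would write $\mathcal{M}=\sum_{i=1}^{2M}\deg_{\mathcal{G}} e_i$, which holds for any directed graph since every directed edge is counted exactly once as an outgoing edge of its tail. Then I would substitute the identity $\deg_{\mathcal{G}} e_i=\deg_G(\out(e_i))-1$, which is valid because a non-backtracking step out of $e_i$ may proceed to any of the $\deg_G(\out(e_i))$ edges incident to $\out(e_i)$ except the one that backtracks along $e_i$. This gives $\mathcal{M}=\sum_{i=1}^{2M}\bigl(\deg_G(\out(e_i))-1\bigr)=\left(\sum_{i=1}^{2M}\deg_G(\out(e_i))\right)-2M$.

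The key step is to reorganize the remaining sum by grouping the oriented edges according to their output vertex. For a fixed $v\in V$, the oriented edges $e_i$ with $\out(e_i)=v$ are precisely the edges $[u,v]$ as $u$ ranges over the neighbors of $v$; there are exactly $\deg_G v$ of them, one per edge incident to $v$. Hence each vertex $v$ contributes $\deg_G v$ terms to the sum, each equal to $\deg_G v$, so $\sum_{i=1}^{2M}\deg_G(\out(e_i))=\sum_{v\in V}(\deg_G v)^2$. Combining this with the previous display yields $\mathcal{M}=\sum_{v\in V}(\deg_G v)^2-2M$, as claimed.

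The only point requiring care, and the main (though mild) obstacle, is the counting in the last step: verifying that exactly $\deg_G v$ of the $2M$ oriented edges have output $v$. This holds because orienting an incident edge toward $v$ gives a bijection between the edges at $v$ and the oriented edges with output $v$; it is worth stating explicitly in order to make the reindexing of the sum transparent.
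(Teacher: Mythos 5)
Your proof is correct and follows essentially the same route as the paper: both count the edges of $\mathcal{G}$ by summing out-degrees, use the identity $\deg_{\mathcal{G}} e = \deg_G(\out(e)) - 1$, and group oriented edges by their output vertex to obtain $\sum_{v\in V}(\deg_G v)^2 - 2M$. The only cosmetic difference is that you split off the $-1$ terms (giving $-2M$) before regrouping, whereas the paper groups the product $\deg_G v(\deg_G v - 1)$ per vertex and expands afterwards.
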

\begin{proof}
The fact that $\mathcal{G}$ has $2M$ nodes is clear by definition. Now, observe that, by definitions of vertex degrees for $G$ and $\mathcal{G}$, we have that
\begin{equation*}
    |\mathcal{E}|=\sum_{e\in \mathcal{V}}\deg_{\mathcal{G}}e, \mbox{ while } M=\frac{\sum_{v\in V}\deg_G v}{2}.
\end{equation*}Therefore,
\begin{equation*}
   |\mathcal{E}|= \sum_{e\in \mathcal{V}}\deg_{\mathcal{G}}e=\sum_{v\in V}\deg_{G}v\cdot (\deg_{G}v-1)=\sum_{v\in V}(\deg_{G}v)^2-2M,
\end{equation*}where the second equality is due to the fact that, for each $v\in V$, there are $\deg_G v$ oriented edges of the form $[w,v]$ and, for each of these oriented edges, there are $\deg_G v-1$ oriented edges of the form $[v,z]$ with $z\neq w$.
\end{proof}
Now, in Example \ref{ex:cycle} we saw that, if $G$ is a cycle graph, then its non-backtracking graph is given by two disconnected cycles. The next theorem shows that this is the only case in which a connected simple graph has a disconnected non-backtracking graph.
\begin{theorem}\label{thm:connected}
Let $G=(V,E)$ be a simple connected graph on $N$ nodes and $M$ edges, with minimum degree $\geq 2$, and let $\mathcal{G}=(\mathcal{V},\mathcal{E})$ be its non-backtracking graph. Then, the following are equivalent:
\begin{enumerate}
    \item $G$ is not the cycle graph;
    \item $G$ has at least two cycles;
    \item $\mathcal{G}$ is weakly connected;
    \item $\mathcal{G}$ is strongly connected.
\end{enumerate}
\end{theorem}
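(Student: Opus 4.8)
The plan is to prove the cyclic chain of implications $(1)\Rightarrow(2)\Rightarrow(4)\Rightarrow(3)\Rightarrow(1)$, which establishes all four equivalences simultaneously. Three of these arrows are short. For $(1)\Rightarrow(2)$ I would use a counting argument: since $G$ is connected with minimum degree $\geq 2$, we have $2M=\sum_{v\in V}\deg_G v\geq 2N$, so $M\geq N$, and the case $M=N$ forces every vertex to have degree exactly $2$, i.e.\ $G$ is connected and $2$-regular, hence a single cycle. Thus if $G$ is not the cycle graph then $M\geq N+1$, so the cycle space has dimension $M-N+1\geq 2$; the fundamental cycles of two distinct non-tree edges are then two distinct cycles. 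For $(4)\Rightarrow(3)$ there is nothing to prove. For $(3)\Rightarrow(1)$ I would argue by contraposition: if $G$ is the cycle graph, then by Example~\ref{ex:cycle} its non-backtracking graph is the disjoint union of two directed cycles, hence not weakly connected.

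The substance is the implication $(2)\Rightarrow(4)$. Writing $e\rightsquigarrow f$ when there is a directed path in $\mathcal{G}$ from $e$ to $f$ (equivalently, a non-backtracking walk in $G$ beginning with the oriented edge $e$ and ending with $f$), I would reduce strong connectivity to a single \emph{reversal} statement:
\begin{equation*}
e\rightsquigarrow e^{-1}\qquad\text{for every oriented edge }e.
\end{equation*}
Granting this, strong connectivity follows quickly. From $e=[v,w]$ one reaches $[w,u]$ for \emph{every} neighbour $u\sim w$: directly when $u\neq v$, and via the reversal $e\rightsquigarrow e^{-1}=[w,v]$ when $u=v$. Iterating along any walk $w=w_0,w_1,\dots,w_k$ in $G$ (which need not be non-backtracking, since the reversal statement applied to each edge absorbs the backtracking steps) yields $e\rightsquigarrow[w_{k-1},w_k]$. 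As $G$ is connected, every oriented edge $f=[x,y]$ arises this way, by taking a walk from $w$ to $x$ followed by the step to $y$; hence $e\rightsquigarrow f$ for all $e,f$, which is precisely strong connectivity.

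It remains to prove the reversal statement, which I expect to be the main obstacle. The goal is to build a non-backtracking closed walk based at $w$ whose first and last steps both avoid $v$; prefixing it with $[v,w]$ and suffixing it with $[w,v]$ then realizes $e\rightsquigarrow e^{-1}$. The mechanism is a \emph{turn-around via a cycle}: starting at $w$ with a first step to some neighbour $\neq v$ (possible as $\deg_G w\geq 2$), follow a non-backtracking walk $w=y_0,y_1,\dots$ until the first vertex repetition $y_t=y_s$. This produces a ``lollipop'' — a path from $w$ to the simple cycle $y_s,\dots,y_t$. Traversing that cycle reverses the direction of travel without backtracking (the neighbours $y_{s-1},y_{s+1},y_{t-1}$ of $y_s$ are pairwise distinct by minimality of $t$), after which one retraces the path back to $w$, necessarily re-entering $w$ along the first edge $y_0y_1$ and hence from a vertex $\neq v$. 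The one degenerate case is $s=0$ with re-entry from $v$, which occurs exactly when the chosen walk traced a simple cycle through the edge $(v,w)$. Here I would invoke hypothesis $(2)$: since $G$ is not a single cycle, some vertex of this cycle carries an additional incident edge, and diverting the walk along it before the return to $w$ breaks the degeneracy. Turning this diversion into a rigorous construction — guaranteeing that the re-routed walk still turns around and returns to $w$ from a neighbour other than $v$ — is the delicate point, and is exactly where the presence of a second cycle is indispensable.
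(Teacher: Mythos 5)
Your proposal is correct and, at its core, follows the same route as the paper: the heart of the paper's proof of Theorem~\ref{thm:connected} is precisely your reversal statement $[v,w]\rightsquigarrow[w,v]$, realized by the same lollipop turnaround (walk from $w$ out to a cycle, traverse the cycle to reverse direction, retrace the stem back to $w$ and then to $v$). The organizational differences are minor: the paper proves $1\Rightarrow 4$ directly, taking a non-backtracking path in $G$ from $w$ to $x$ and splicing in the turnaround only in the bad cases ($p_1=v$ or $p_k=y$), whereas you isolate the reversal as a standalone lemma and recover strong connectivity by iterating it along an arbitrary, possibly backtracking, walk --- a somewhat cleaner reduction; likewise your cycle-space count for $1\Rightarrow 2$ replaces the paper's ``clearly''. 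Regarding the degenerate case you single out --- the walk from $[v,w]$ closing into a simple cycle through the edge $(v,w)$ itself --- the paper does not treat it either: it simply asserts, ``by the assumptions that $G$ is not the cycle graph and each vertex in $G$ has degree $\geq 2$'', that a lollipop-shaped non-backtracking path with $r\geq 1$ (i.e.\ whose cycle does not close at $v$) exists, pointing to Figure~\ref{fig:connected}. So the step you flag as delicate is exactly the step the paper leaves informal; your write-up supplies everything the paper's proof does, and is in fact more candid about where the presence of a second cycle is genuinely needed.
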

\begin{proof} Clearly, since the minimum degree in $G$ is $\geq 2$, the first two conditions are equivalent to each other. Moreover, 4 clearly implies 3 and, by Example \ref{ex:cycle}, 3 implies 1. Hence, if we prove that 1 implies 4, we are done.\newline

Assume that $G$ is not the cycle graph and fix two distinct elements $[v,w],[x,y]\in \mathcal{V}$. We want to show that there exists a directed path, in $\mathcal{G}$, from $[v,w]$ to $[x,y]$.\newline
Since $G$ is connected, there exists a path, in $G$, of the form
\begin{equation*}
    (w,p_1),(p_1,p_2),\ldots, (p_{k-1},p_k),(p_k,x)
\end{equation*}which is non-backtracking, i.e., such that $p_2\neq w$, $p_{k-1}\neq x$ and $p_i\neq p_{i+2}$ for $i=1,\ldots,k-2$. This gives a directed path, in $\mathcal{G}$, of the form
\begin{equation*}
 [w,p_1]\rightarrow[p_1,p_2]\rightarrow\ldots\rightarrow [p_{k-1},p_k]\rightarrow[p_k,x].  
\end{equation*}
If $p_1\neq v$ and $p_k\neq y$, then $\mathcal{G}$ has also the directed path
\begin{equation*}
 [v,w]\rightarrow [w,p_1]\rightarrow[p_1,p_2]\rightarrow\ldots\rightarrow [p_{k-1},p_k]\rightarrow[p_k,x]\rightarrow [x,y],
\end{equation*}hence the claim holds in this case.\newline
If $p_1= v$ and $p_k\neq y$, then by the assumptions that $G$ is not the cycle graph and each vertex in $G$ has degree $\geq 2$, it follows that there exists a non-backtracking path in $G$ of the form (Figure \ref{fig:connected})
\begin{equation*}
  (v,w),(w=q_1,q_2),\ldots, (q_{r-1},q_r=c_1),\ldots, (c_{s-1},c_s=c_1),
\end{equation*}
for some $r\geq 1$ and $s\geq 3$. 

\begin{figure}[h]
    \centering
    \includegraphics[width=8cm]{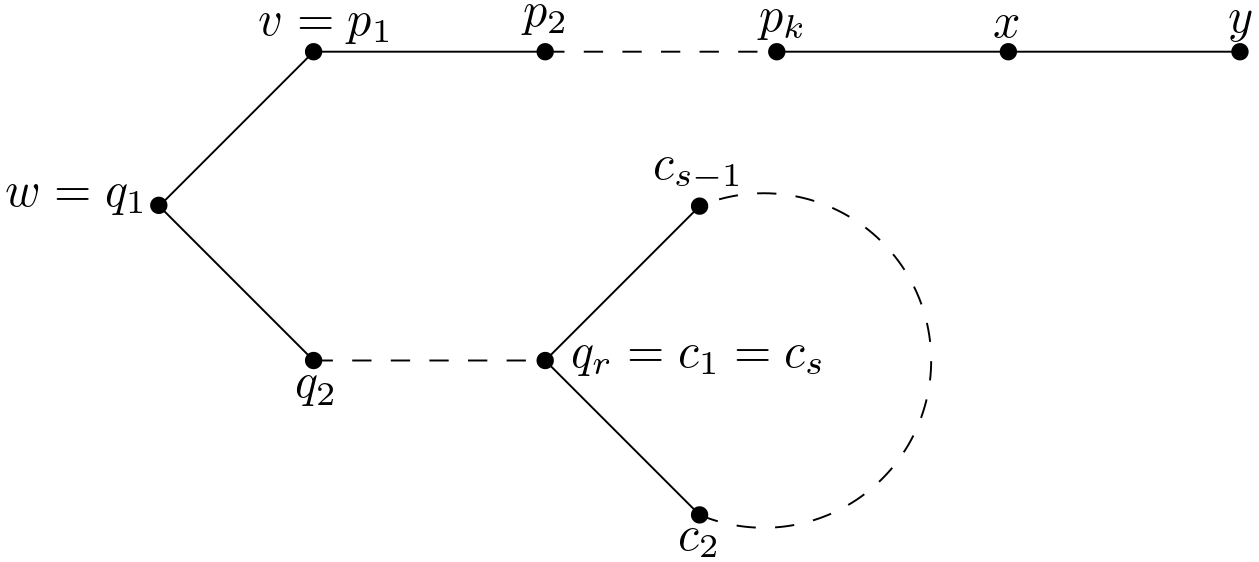}
    \caption{An illustration of the proof of Theorem \ref{thm:connected}.}
    \label{fig:connected}
\end{figure}

In this case, there exists a directed path from $[v,w]$ to $[x,y]$, in $\mathcal{G}$, of the form
\begin{align*}
    &[v,w]\rightarrow [w,q_2]\rightarrow\ldots\rightarrow [q_{r-1},q_r=c_1]\rightarrow  \ldots\rightarrow  [c_{s-1},c_s=c_1]\\
    &\rightarrow[c_1=q_r,q_{r-1}]\rightarrow \ldots \rightarrow[q_2,w]\rightarrow [w,v]\\
    &\rightarrow [v,p_2]\rightarrow \ldots \rightarrow [p_k,x]\rightarrow [x,y].
\end{align*}
If either $p_1\neq v$ and $p_k= y$, or $p_1\neq v$ and $p_k\neq y$, the claim follows in a similar way. This shows that, if $G$ is not the cycle graph, then $\mathcal{G}$ is strongly connected.
\end{proof}

\subsection{Non-backtracking Laplacian}
In this section, we shall focus on the non-backtracking Laplacian of $G$. We start by considering a general directed graph $\mathcal{G}=(\mathcal{V},\mathcal{E})$ on $\mathcal{N}$ nodes and $\mathcal{M}$ edges that has no vertices of degree $0$ and which is not, necessarily, the non-backtracking graph of a simple graph. We observe that, given any $\mathcal{N}\times \mathcal{N}$ real matrix $\mathcal{O}$, we can equivalently see it as an operator 
\begin{equation*}
    \mathcal{O}:\{f:\mathcal{V}\rightarrow \mathbb{C}\}\rightarrow \{f:\mathcal{V}\rightarrow \mathbb{C}\}.
\end{equation*}In particular, an eigenvector $\mathbf{x}\in \mathbb{C}^{\mathcal{N}}$ for $\mathcal{O}$ with eigenvalue $\lambda\in \mathbb{C}$ can be seen as a function (called an \emph{eigenfunction}) $f:\mathcal{V}\rightarrow \mathbb{C}$ such that
\begin{equation*}
    \mathcal{O}f(\omega)=\lambda f(\omega), \quad \forall \omega\in \mathcal{V}.
\end{equation*}This applies, in particular, to the random walk Laplacian $\mathcal{L}$ of $\mathcal{G}$, which can be seen as an operator such that, given $f:\mathcal{V}\rightarrow \mathbb{C}$ and $\omega \in \mathcal{V}$,
\begin{equation*}
    \mathcal{L}f(\omega)=f(\omega)-\frac{1}{\deg \omega}\left( \sum_{\omega\rightarrow \tau}f(\tau)\right).
\end{equation*}In particular, a pair $(\lambda,f)$ with $\lambda\in \mathbb{C}$ and $f:\mathcal{V}\rightarrow \mathbb{C}$ is an eigenpair for $\mathcal{L}$ if and only if, for each $\omega\in \mathcal{V}$,
\begin{equation*}
    (1-\lambda)f(\omega)=\frac{1}{\deg \omega}\left( \sum_{\omega\rightarrow \tau}f(\tau)\right).
\end{equation*}The next results hold for the random walk Laplacian $\mathcal{L}$ and adjacency matrix $\mathcal{A}$ of any such directed graph $\mathcal{G}$. The observations in Remark \ref{rmk:Bauer} below have been proved by Bauer in \cite{Bauer}.

\begin{remark}\label{rmk:Bauer}
Clearly, for any directed graph $\mathcal{G}=(\mathcal{V},\mathcal{E})$ on $\mathcal{N}$ nodes, its random walk Laplacian $\mathcal{L}$ has $\mathcal{N}$ eigenvalues (counted with algebraic multiplicity) that sum to $\mathcal{N}$, since $\mathcal{L}$ is an $\mathcal{N}\times \mathcal{N}$ matrix that has trace $\mathcal{N}$. Moreover, by Proposition 3.2 in \cite{Bauer}, the spectrum of $\mathcal{L}$ is contained in the complex disc $D(1,1)$. In particular, the real eigenvalues are contained in $[0,2]$. Also, by Proposition 3.1 in \cite{Bauer}, $0$ is an eigenvalue for $\mathcal{L}$ and the constant functions $f:\mathcal{V}\rightarrow\mathbb{C}$ are the corresponding eigenfunctions. As a consequence, from the spectrum of $\mathcal{L}$ we can derive the number of connected components of $\mathcal{G}$. Notably, this does not hold for the adjacency matrix $\mathcal{A}$ of $\mathcal{G}$. 
\end{remark}

Now, it is well-known that, for a simple undirected graph $G$, the following are equivalent \cite{chung,brouwer}:
\begin{enumerate}
    \item $G$ is bipartite;
    \item The spectrum of the random walk Laplacian $L(G)$ is symmetric with respect to the line $x=1$;
    \item $2$ is an eigenvalue of $L(G)$;
    \item The spectrum of the adjacency matrix $A(G)$ is symmetric with respect to the line $x=0$.
\end{enumerate}
In the next proposition we prove that, for a directed graph $\mathcal{G}$, condition 1 above implies 2, 3 and 4. However, as shown in Example \ref{ex:bipartite} below, these conditions are not equivalent.
\begin{proposition}\label{prop:bipartite}
If $\mathcal{G}$ is a directed bipartite graph, then the spectra of both its random walk Laplacian $\mathcal{L}$ and its adjacency matrix $\mathcal{A}$ are symmetric. Hence, in particular, $2$ is an eigenvalue for $\mathcal{L}$.
\end{proposition}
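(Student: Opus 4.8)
The plan is to exploit the bipartite structure through a diagonal sign involution, which simultaneously handles both $\mathcal{A}$ and $\mathcal{L}$. Write the underlying (undirected) bipartition as $\mathcal{V}=\mathcal{V}_1\sqcup\mathcal{V}_2$, so that every directed edge of $\mathcal{G}$ joins a vertex of $\mathcal{V}_1$ to a vertex of $\mathcal{V}_2$ (in one direction or the other). I would then define the $\mathcal{N}\times\mathcal{N}$ diagonal matrix $S$ by $S_{vv}=+1$ for $v\in\mathcal{V}_1$ and $S_{vv}=-1$ for $v\in\mathcal{V}_2$. The only structural facts I need are that $S$ is an involution, $S^2=\id$ (so conjugation by $S$ is a similarity), and that $S$ is diagonal, hence commutes with the diagonal matrix $\mathcal{D}^{-1}$.

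The key step is the conjugation identity for the adjacency matrix. For any $v,w$, one has $(S\mathcal{A}S)_{vw}=S_{vv}\,\mathcal{A}_{vw}\,S_{ww}$, and whenever $\mathcal{A}_{vw}=1$ the edge $v\rightarrow w$ forces $v$ and $w$ into different parts, so $S_{vv}S_{ww}=-1$. Therefore $S\mathcal{A}S=-\mathcal{A}$, which means $\mathcal{A}$ and $-\mathcal{A}$ are similar; since similar matrices share the same characteristic polynomial, their spectra agree as multisets, and the spectrum of $\mathcal{A}$ is symmetric about $0$. For the random walk Laplacian I would compute $S\mathcal{L}S=\id-S\mathcal{D}^{-1}\mathcal{A}S$, use that $S$ commutes with $\mathcal{D}^{-1}$ to pull it across, and invoke $S\mathcal{A}S=-\mathcal{A}$ to obtain
\begin{equation*}
S\mathcal{L}S=\id-\mathcal{D}^{-1}(S\mathcal{A}S)=\id+\mathcal{D}^{-1}\mathcal{A}=2\id-\mathcal{L}.
\end{equation*}
Thus $\mathcal{L}$ is similar to $2\id-\mathcal{L}$, whose eigenvalues are exactly $2-\lambda$ as $\lambda$ ranges over the spectrum of $\mathcal{L}$; comparing multisets shows the spectrum of $\mathcal{L}$ is invariant under $\lambda\mapsto 2-\lambda$, i.e.\ symmetric with respect to the line $x=1$.

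For the final assertion, I would combine this symmetry with Remark \ref{rmk:Bauer}, which guarantees that $0$ is always an eigenvalue of $\mathcal{L}$ (with the constant functions as eigenfunctions). Applying the reflection $\lambda\mapsto 2-\lambda$ to the eigenvalue $0$ yields $2$, so $2$ lies in the spectrum of $\mathcal{L}$. I do not anticipate a serious obstacle here: the entire argument is formal once $S$ is in place. The one point requiring care—and the closest thing to a real step—is the edge-coloring claim $S\mathcal{A}S=-\mathcal{A}$, which uses the bipartiteness of the \emph{underlying} graph rather than any property of the orientation; this is exactly the place where Example \ref{ex:bipartite} will later show the converse fails, so I would state precisely what bipartiteness buys us and nothing more.
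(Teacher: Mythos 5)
Your proof is correct and is essentially the same argument as the paper's: the diagonal sign involution $S$ is exactly the paper's eigenfunction transformation $\tilde{f}$ (which equals $f$ on $\mathcal{V}_1$ and $-f$ on $\mathcal{V}_2$), just packaged as a matrix similarity $S\mathcal{L}S = 2\id - \mathcal{L}$ instead of being applied to individual eigenpairs, and both proofs conclude by reflecting the guaranteed eigenvalue $0$ to get $2$. If anything, your conjugation formulation is marginally tidier, since similarity gives symmetry of the spectrum with algebraic multiplicities for free, while the paper's eigenpair version, as stated, directly addresses geometric multiplicities only.
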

\begin{proof}
Without loss of generality, we only prove the first claim for the random walk Laplacian, the other case being similar. The proof follows the same idea as the proof for the undirected case in \cite[Lemma 1.8]{chung}. \newline
If $\mathcal{G}=(\mathcal{V},\mathcal{E})$ is bipartite, $\mathcal{V}_1\sqcup \mathcal{V}_2$ is a corresponding bipartition and $(\lambda,f)$ is an eigenpair for $\mathcal{L}$, then also $(2-\lambda,\tilde{f})$ is an eigenpair, where
\begin{equation*}
    \tilde{f}\coloneqq\begin{cases}f &\text{ on }\mathcal{V}_1\\ -f&\text{ on }\mathcal{V}_2. \end{cases}
\end{equation*}As an immediate consequence, $2$ is an eigenvalue for $\mathcal{L}$, since $0$ is always an eigenvalue.
\end{proof}

The next example shows a directed graph that is not bipartite but is such that $2$ is an eigenvalue for its random walk Laplacian.
\begin{example}\label{ex:bipartite}
Consider the connected graph $\mathcal{G}$ in Figure \ref{fig:counterex}, where the numbers on the vertices indicate the values of a function $f$. Then, $\mathcal{G}$ is not bipartite, and $f$ is an eigenfunction for $\mathcal{L}(G)$ with eigenvalue $2$.
\end{example}
\begin{figure}[h]
    \centering
    \includegraphics[width=0.35\textwidth]{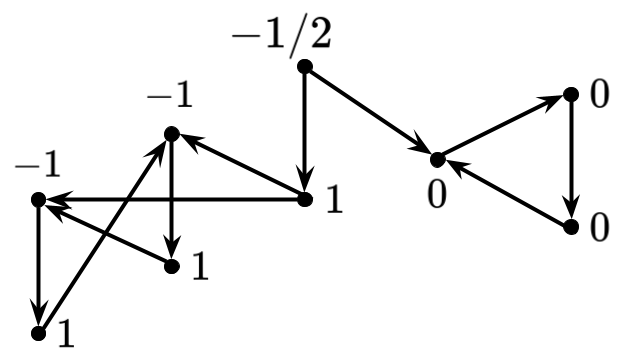}
    \caption{The directed graph in Example \ref{ex:bipartite}.}
    \label{fig:counterex}
\end{figure}

\begin{remark}\label{rmk:regular}
For any directed graph $\mathcal{G}$, its random walk Laplacian $\mathcal{L}$ and its adjacency matrix $\mathcal{A}$ satisfy:
\begin{itemize}
    \item $\mathbf{x}$ is an eigenvector for $\mathcal{A}$ with eigenvalue $0$ if and only if $\mathbf{x}$ is an eigenvector for $\mathcal{L}$ with eigenvalue $1$;
    \item If $\mathcal{G}$ is $k$--regular, then $(\mathbf{x},\lambda)$ is an eigenpair for $\mathcal{A}$ if and only if $(\mathbf{x},1-\frac{\lambda}{k})$ is an eigenpair for $\mathcal{L}$.
\end{itemize}
\end{remark}

The properties of the random walk Laplacian that we investigated so far in this section hold for any directed graph. For the next observations and results, we focus on the case of non-backtracking graphs. As before, we fix a simple graph $G=(V,E)$ on $N$ nodes and $M$ edges that has minimum degree $\geq 2$. For simplicity, we denote its non-backtracking graph by $\mathcal{G}=(\mathcal{V},\mathcal{E})$. We let $\mathcal{A}$ denote the adjacency matrix of $\mathcal{G}$ (equivalently, the transpose of the non-backtracking matrix of $G$) and we let $\mathcal{L}$ denote the random walk Laplacian of $\mathcal{G}$ (equivalently, the non-backtracking Laplacian of $G$). Similarly, we let $A$ and $L$ denote the adjacency matrix and the random walk Laplacian of $G$, respectively. 

\begin{remark}\label{rmk:regular2}
As observed in Section \ref{section:basic}, $G$ is regular if and only if $\mathcal{G}$ is regular.  In this case, by Remark \ref{rmk:regular}, the spectral properties of $\mathcal{L}$ and $\mathcal{A}$ are equivalent to each other, and similarly also the spectral properties of $L$ and $A$ are equivalent to each other. But since it is known that, in the regular case, the eigenvalues of $\mathcal{A}$ can be recovered by those of $A$ \cite{kotani20002,Kempton2016}, it follows that in this case also the spectral theory of $\mathcal{L}$ can be recovered from the one of $A$ or, equivalently, of $L$.
\end{remark}

Before stating the final theorem of this section, we define the $2M\times 2M$ matrix \begin{equation*}
    P\coloneqq\begin{pmatrix}
  \begin{matrix}
  0 & \id \\
\id & 0
  \end{matrix}
\end{pmatrix}
\end{equation*}and we observe that $P^\top=P$, while $P^2=\id$.\newline
Moreover, given $\mathbf{x},\mathbf{y}\in \mathbb{C}^{2M}$, we let $\langle \mathbf{x},\mathbf{y} \rangle \coloneqq \overline{\mathbf{x}}^\top \mathbf{y}$ be the usual complex inner product, and we define their \emph{$P$-product} as
\begin{equation*}
    (\mathbf{x},\mathbf{y})_P \coloneqq \langle \mathbf{x}, P \mathbf{y} \rangle = \overline{\mathbf{x}}^\top P \mathbf{y}.
\end{equation*}

\begin{theorem}\label{thm:Psymm}
Let $\mathcal{L}$ be the non-backtracking Laplacian of a graph. Then,
\begin{enumerate}
    \item $\mathcal{L}^\top=P\mathcal{L}P$;
    \item $\mathcal{L}$ is self-adjoint with respect to the $P$-product;
    \item If $(\lambda,\mathbf{x})$ is an eigenpair for $\mathcal{L}$ and $\lambda\in\mathbb{C}$ is not real, then  
    \begin{equation*}
    \sum_{[v, w]}\overline{x_{[v, w]}}\cdot x_{[w, v]}=\sum_{i=1}^m\left(\overline{x_i}\cdot x_{i+M}+\overline{x_{i+M}}\cdot x_{i}\right)=0.
\end{equation*}
\end{enumerate}
\end{theorem}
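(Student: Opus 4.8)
The plan is to establish part 1 first and then obtain parts 2 and 3 as essentially formal consequences. The starting point is the observation that the quantity in part 3 is exactly the $P$-product of $\mathbf{x}$ with itself: since $P$ implements the edge-reversal involution $\sigma\colon e_i\mapsto e_i^{-1}$, the vector $P\mathbf{x}$ has entries $(P\mathbf{x})_{[v,w]}=x_{[w,v]}$, so
\[
(\mathbf{x},\mathbf{x})_P=\overline{\mathbf{x}}^\top P\mathbf{x}=\sum_{[v,w]}\overline{x_{[v,w]}}\cdot x_{[w,v]},
\]
and grouping the two oriented copies of each edge gives the second expression. Thus all three statements revolve around the interaction of $\mathcal{L}$ with $P$.

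For part 1 I would write $\mathcal{L}=\id-\mathcal{D}^{-1}\mathcal{A}$ and compute both sides entrywise through the permutation $\sigma$. The first ingredient is $P\mathcal{A}P=\mathcal{A}^\top$: since $P$ is the (symmetric) permutation matrix of the involution $\sigma$, one has $(P\mathcal{A}P)_{ij}=\mathcal{A}_{\sigma(i)\sigma(j)}$, and writing $e_i=[a,b]$, $e_j=[c,d]$ the adjacency condition $\out(e_{\sigma(i)})=\inp(e_{\sigma(j)})$, $\inp(e_{\sigma(i)})\neq\out(e_{\sigma(j)})$ becomes $a=d$, $b\neq c$, which is precisely the condition for $\mathcal{A}_{ji}=\mathcal{A}^\top_{ij}=1$. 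In words, reversing every oriented edge simultaneously transposes the non-backtracking adjacency, because the non-backtracking condition is symmetric under this reversal. Using $P^2=\id$, this reduces part 1 to the single identity
\[
\mathcal{A}^\top\mathcal{D}^{-1}=(P\mathcal{D}^{-1}P)\,\mathcal{A}^\top.
\]

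I expect the degree factor to be the only genuine obstacle, because $P$ does \emph{not} commute with $\mathcal{D}$: the diagonal entry $(P\mathcal{D}P)_{ii}=\deg_G(\inp(e_i))-1$ differs in general from $\mathcal{D}_{ii}=\deg_G(\out(e_i))-1$. The resolution is that only the matched entries matter. Comparing entries, the left-hand side scales column $j$ of $\mathcal{A}^\top$ by $\mathcal{D}_{jj}^{-1}$ and the right-hand side scales row $i$ by $(P\mathcal{D}^{-1}P)_{ii}$; these agree on every nonzero entry because $\mathcal{A}^\top_{ij}=\mathcal{A}_{ji}=1$ forces $e_j\to e_i$ in $\mathcal{NB}(G)$, i.e.\ $\out(e_j)=\inp(e_i)$, whence $\deg_G(\out(e_j))=\deg_G(\inp(e_i))$ and the two degree factors coincide. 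This is exactly the point where the defining relation $\deg_{\mathcal{G}}e_i=\deg_G(\out(e_i))-1$ enters.

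Parts 2 and 3 then follow formally. Since $\mathcal{L}$ is real and $P^\top=P$, one has $(\mathcal{L}\mathbf{x},\mathbf{y})_P=\overline{\mathbf{x}}^\top\mathcal{L}^\top P\mathbf{y}$ and $(\mathbf{x},\mathcal{L}\mathbf{y})_P=\overline{\mathbf{x}}^\top P\mathcal{L}\mathbf{y}$, so self-adjointness with respect to the $P$-product is equivalent to $\mathcal{L}^\top P=P\mathcal{L}$, which is exactly part 1 after multiplying on the right by $P$ and using $P^2=\id$; this settles part 2. For part 3 I would run the standard self-adjoint argument in the indefinite form: taking the two arguments equal and using that the $P$-product is conjugate-linear in its first slot and linear in its second,
\[
\overline{\lambda}\,(\mathbf{x},\mathbf{x})_P=(\mathcal{L}\mathbf{x},\mathbf{x})_P=(\mathbf{x},\mathcal{L}\mathbf{x})_P=\lambda\,(\mathbf{x},\mathbf{x})_P,
\]
so $(\overline{\lambda}-\lambda)(\mathbf{x},\mathbf{x})_P=0$; since $\lambda\notin\mathbb{R}$ we may divide by $\overline{\lambda}-\lambda\neq0$ to conclude $(\mathbf{x},\mathbf{x})_P=0$, which is the claimed identity.
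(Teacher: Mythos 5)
Your proposal is correct and takes essentially the same route as the paper: establish $\mathcal{L}^\top = P\mathcal{L}P$ by an entrywise computation, deduce $P$-self-adjointness formally from it, and then run the standard indefinite-inner-product argument ($\lambda(\mathbf{x},\mathbf{x})_P = \overline{\lambda}(\mathbf{x},\mathbf{x})_P$, so $(\mathbf{x},\mathbf{x})_P = 0$ when $\lambda\notin\mathbb{R}$) for part 3. The only difference is organizational: the paper proves part 1 in one line via the probability symmetry $\mathbb{P}(e_i \rightarrow e_j) = \mathbb{P}(e_j^{-1} \rightarrow e_i^{-1})$, whereas you factor the same verification into the two steps $P\mathcal{A}P=\mathcal{A}^\top$ and the matching of degree factors on nonzero entries (using $\out(e_j)=\inp(e_i)$), which makes explicit exactly why the non-commutativity of $P$ and $\mathcal{D}$ is harmless.
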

\begin{proof}
\begin{enumerate}
    \item We have that, for $i\neq j$ with $i, j \leq M$,
\begin{equation*}
    \mathcal{L}_{ij}=-\mathbb{P}(e_i \rightarrow e_j)=-\mathbb{P}(e_j^{-1} \rightarrow e_i^{-1})=\mathcal{L}_{j+M,i+M}.
\end{equation*}This allows us to write
\begin{equation*}\label{eqn:plp}
    \mathcal{L}^\top = P \mathcal{L} P.
\end{equation*}
\item Since $P^\top=P$ and $P^2=\id$, we have that
\begin{align*}
    (\mathbf{x},\mathcal{L}\mathbf{y})_P&=\langle  \mathbf{x}, P \mathcal{L}\mathbf{y}\rangle =\langle  P \mathbf{x},  \mathcal{L}\mathbf{y}\rangle =\langle   \mathcal{L}^\top P\mathbf{x},  \mathbf{y}\rangle \\
    &=\langle  P \mathcal{L} P^2\mathbf{x},  \mathbf{y}\rangle =\langle  P \mathcal{L} \mathbf{x},  \mathbf{y}\rangle =\langle   \mathcal{L} \mathbf{x}, P \mathbf{y}\rangle =(\mathcal{L}\mathbf{x},\mathbf{y})_P.
\end{align*}
Therefore, $\mathcal{L}$ is self-adjoint with respect to the $P$-product.
\item The second claim implies that, if $(\lambda,\mathbf{x})$ is an eigenpair for $\mathcal{L}$, then
\begin{equation*}
   \lambda(\mathbf{x},\mathbf{x})_P=(\mathbf{x},\lambda \mathbf{x})_P= (\mathbf{x},\mathcal{L}\mathbf{x})_P=(\mathcal{L}\mathbf{x},\mathbf{x})_P=(\lambda\mathbf{x}, \mathbf{x})_P= \overline{\lambda}(\mathbf{x},\mathbf{x})_P.
\end{equation*}
Hence, if $\lambda\neq\overline{\lambda}$, i.e., $\lambda$ is not real, then $(\mathbf{x},\mathbf{x})_P=0$, that is, $\overline{\mathbf{x}}^\top P \mathbf{x}=0$, which can be re-written as
\begin{equation*}
   \sum_{[v, w]}\overline{x_{[v, w]}}\cdot x_{[w, v]}==\sum_{i=1}^m\left(\overline{x_i}\cdot x_{i+M}+\overline{x_{i+M}}\cdot x_{i}\right)=0.
\end{equation*}
\end{enumerate}
\end{proof}

\begin{remark}
The preceding theorem also holds for the adjacency matrix of $\mathcal{G}$ (equivalently, the non-backtracking matrix of $G$). The proofs are analogous and thus omitted.
\end{remark}

Following the terminology in \cite{bordenave}, the first condition in Theorem \ref{thm:Psymm} can be reformulated by saying that $\mathcal{L}$ is \emph{PT-symmetric} (where PT stands for parity-time). Moreover, following the terminology in \cite{gohberg2006indefinite}, the second condition in Theorem \ref{thm:Psymm} can be reformulated by saying that $\mathcal{L}$ is \emph{$P$-self adjoint}.

\begin{remark}
Figure \ref{fig:wow} shows the eigenvalues of the non-backtracking matrix and the non-backtracking Laplacian matrix of an Erd\H{o}s-R\'enyi graph. It is known that the ``bulk'' of the eigenvalues of the matrix $\mathcal{A}$ (equivalently, of the non-backtracking matrix $\mathcal{A}^\top$) concentrates on the circle with radius $\sqrt{\alpha - 1}$, where $\alpha$ is the expected average degree of the graph \cite{bordenave2018nonbacktracking,krzakala2013spectral}.
Figure \ref{fig:wow} seems to suggest, among other things, that there is a similar behavior in the case of the non-backtracking Laplacian, where most eigenvalues are observed to concentrate around a circle with radius $1/\sqrt{\alpha - 1}$.
Fully establishing this fact is left for future research.
In the case of the non-backtracking Laplacian, the figure also shows there is a spectral gap from $(1, 0)$ of size $1/(\Delta - 1)$, where $\Delta$ is the largest degree in the graph.
We establish this fact in the next section.
\end{remark}

\begin{figure}
\makebox[\textwidth][c]{\includegraphics[width=1.2\textwidth]{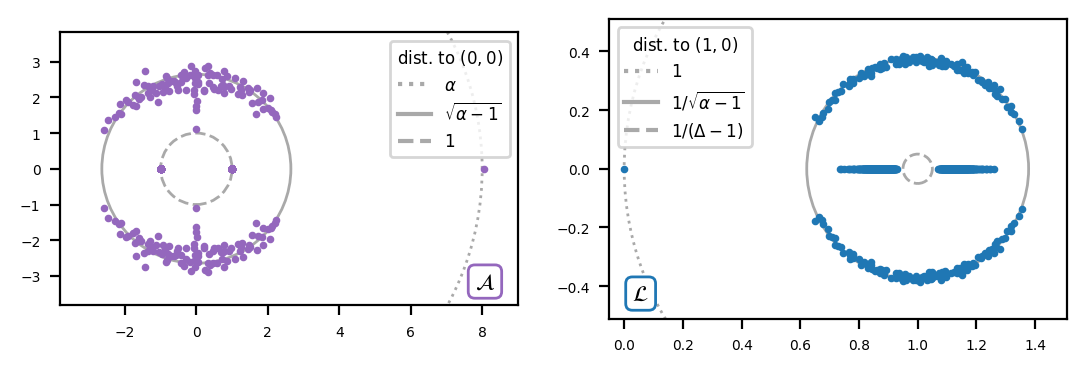}}
\caption{\label{fig:wow}
Eigenvalues of $\mathcal{A}$ (left) and $\mathcal{L}$ (right) of a random Erd\H{o}s-R\'enyi graph with $N=100$ nodes and expected average degree $\alpha=8$.
$\Delta$ is the largest degree in the graph.
}
\end{figure}

\section{Spectral gap from 1}\label{section:gap}

In this section, we investigate another property of the non-backtracking Laplacian, namely, the spectral gap from $1$.\newline

As before, we fix a simple graph $G=(V,E)$ that has minimum vertex degree $\delta \geq 2$. We let $\Delta$ denote the maximum vertex degree of $G$, and we let $\mathcal{G}$ denote the non-backtracking graph of $G$. We also let $\mathcal{A}$ be the adjacency matrix of $\mathcal{G}$ (equivalently, the transpose of the non-backtracking matrix of $G$), we let $\mathcal{D}$ denote the degree matrix of $\mathcal{G}$ and we let $\mathcal{L}=\id-\mathcal{D}^{-1}\mathcal{A}$ denote the random walk Laplacian of $\mathcal{G}$ (equivalently, the non-backtracking Laplacian of $G$).\newline Moreover, given any operator $\mathcal{O}$, we let $\sigma(\mathcal{O})$ denote its spectrum and we let $\rho(\mathcal{O})$ denote its spectral radius, i.e., the largest modulus of its eigenvalues. \newline
 
It is known that $0\in\sigma(\mathcal{A}^\top)= \sigma(\mathcal{A})$ if and only if $G$ contains nodes of degree $1$ \cite{torres2021nonbacktracking}.  Since $\delta \ge 2$, this implies that $1\notin \sigma(\mathcal{L})$. Hence, the spectral gap from $1$ for $\mathcal{L}$,
 \begin{equation*}
\varepsilon\coloneqq\min_{\lambda\in \sigma(\mathcal{L})}\left|1-\lambda\right|=\min_{\lambda\in \sigma(\mathcal{D}^{-1}\mathcal{A})}|\lambda|,
\end{equation*}is non-zero. In Theorem \ref{thm:gap1} below we give a lower bound for $\varepsilon$, and we prove that the bound is sharp.\newline

The proof of Theorem \ref{thm:gap1} uses the fact that
\begin{equation*}
\min_{\lambda \in \sigma(\mathcal{A}P)} | \lambda |^2 = 1,
\end{equation*}
where $P$ is the operator that satisfies $P\mathcal{A}P = \mathcal{A}^*$ and $P\mathcal{L}P = \mathcal{L}^*$.
This fact is established in Lemma \ref{lem:bp-spectrum}, which we relegate to the end of this section.
 
\begin{theorem}\label{thm:gap1}Let $G$ be a simple graph with maximum vertex degree $\Delta$. Then, the spectral gap from $1$ for the non-backtracking Laplacian $\mathcal{L}$ of $G$ satisfies
 \begin{equation*}
      \varepsilon \geq \frac{1}{\Delta-1}.
 \end{equation*}Moreover, the bound is sharp.
 
 \begin{proof}
 We follow the notations in the beginning of this section. We observe that, since $0\notin\sigma(\mathcal{A})$, the matrix $\mathcal{A}$ is invertible. Therefore, we can write
 \begin{equation*}
\varepsilon^{-1}=\max_{\lambda\in \sigma(\mathcal{D}\mathcal{A}^{-1})}|\lambda|=\rho(\mathcal{D}\mathcal{A}^{-1}).
 \end{equation*}
 Further, it is known that any sub-multiplicative norm $\| \cdot \|$ satisfies
\begin{equation*}
\rho \left( \mathcal{D A}^{-1} \right) \leq \| \mathcal{D A}^{-1} \| \leq \| \mathcal{D} \| \| \mathcal{A}^{-1} \|.
\end{equation*}
Take for example the spectral norm $\| \cdot \|_2$ and note that $\| \mathcal{D} \|_2=\Delta-1$.
Also note that $\| \mathcal{A}^{-1} \|_2^2$ equals the largest magnitude among the eigenvalues of $\left( \mathcal{A}^{-1} \right)^* \mathcal{A}^{-1}$.
But we have $\mathcal{A}^* = P \mathcal{A} P$, and thus
\begin{equation*}
\| \mathcal{A}^{-1}\|_2^2 = \max_{\lambda\in\sigma\left( P \mathcal{A}^{-1} P \mathcal{A}^{-1} \right)} | \lambda  | = \max_{\lambda\in\sigma\left( P \mathcal{A}^{-1} \right)} | \lambda |^2 = \frac{1}{\min_{\lambda\in\sigma\left( \mathcal{A} P \right)} |\lambda |^2} = 1,
\end{equation*}
where we have used that the smallest magnitude among eigenvalues of $\mathcal{A} P$ is $1$, this is shown in Lemma \ref{lem:bp-spectrum}. Therefore,
\begin{equation*}
\varepsilon^{-1} = \rho \left( \mathcal{D A}^{-1} \right) \leq \| \mathcal{D} \|_2 \| \mathcal{A}^{-1} \|_2 = \Delta - 1.
\end{equation*}
Thus, the spectral gap $\varepsilon$ is at least $\left( \Delta - 1 \right)^{-1}$.\newline To prove the lower bound is sharp, recall that each eigenvalue $\lambda$ of $\mathcal{A}$ satisfies $1 \leq |\lambda|$ and $1 \in \sigma(\mathcal{A})$ \cite{kotani20002}. Thus we have $\rho( \mathcal{A}^{-1} )= 1$. Now suppose $G$ is $\Delta$--regular, then
\begin{equation*}
\varepsilon^{-1} = \rho \left( \mathcal{D A}^{-1} \right) = \left( \Delta - 1 \right) \rho \left( \mathcal{A}^{-1} \right) = \left( \Delta - 1 \right),
\end{equation*}hence this bound is sharp.
\end{proof}
\end{theorem}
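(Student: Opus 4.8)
The plan is to translate the spectral gap $\varepsilon = \min_{\lambda \in \sigma(\mathcal{D}^{-1}\mathcal{A})} |\lambda|$ into the spectral radius of a matrix that can be controlled by an operator norm, and then to exhibit an explicit family of graphs meeting the bound. Since $\delta \geq 2$ forces $0 \notin \sigma(\mathcal{A})$, the adjacency matrix $\mathcal{A}$ of the non-backtracking graph is invertible, so $\mathcal{D}^{-1}\mathcal{A}$ is invertible and inverting each eigenvalue yields $\varepsilon^{-1} = \rho\big((\mathcal{D}^{-1}\mathcal{A})^{-1}\big) = \rho(\mathcal{A}^{-1}\mathcal{D})$. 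As $\mathcal{A}^{-1}\mathcal{D}$ and $\mathcal{D}\mathcal{A}^{-1}$ are conjugate (via $\mathcal{A}$) and hence cospectral, this equals $\rho(\mathcal{D}\mathcal{A}^{-1})$. From here I would bound the spectral radius by the spectral norm and use submultiplicativity, $\rho(\mathcal{D}\mathcal{A}^{-1}) \leq \|\mathcal{D}\|_2\,\|\mathcal{A}^{-1}\|_2$. The first factor is immediate, since $\mathcal{D}$ is diagonal with entries $\deg_G(\out(e_i)) - 1$, whose maximum is $\Delta - 1$; thus $\|\mathcal{D}\|_2 = \Delta - 1$.

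The heart of the lower bound is the identity $\|\mathcal{A}^{-1}\|_2 = 1$. I would obtain it from the $P$-symmetry $\mathcal{A}^* = P\mathcal{A}P$ (the adjacency-matrix analogue of Theorem \ref{thm:Psymm}). Because $P^2 = \id$, this gives $(\mathcal{A}^{-1})^* = P\mathcal{A}^{-1}P$, whence $\|\mathcal{A}^{-1}\|_2^2 = \rho\big((\mathcal{A}^{-1})^*\mathcal{A}^{-1}\big) = \rho\big((P\mathcal{A}^{-1})^2\big)$. Since $(\mathcal{A}P)^{-1} = P\mathcal{A}^{-1}$, the eigenvalues of $P\mathcal{A}^{-1}$ are the reciprocals of those of $\mathcal{A}P$, so squaring and maximizing gives $\|\mathcal{A}^{-1}\|_2^2 = 1 / \min_{\lambda \in \sigma(\mathcal{A}P)} |\lambda|^2$. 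At this point I would invoke Lemma \ref{lem:bp-spectrum}, which supplies $\min_{\lambda \in \sigma(\mathcal{A}P)} |\lambda|^2 = 1$, yielding $\|\mathcal{A}^{-1}\|_2 = 1$ and therefore $\varepsilon^{-1} \leq \Delta - 1$, that is, $\varepsilon \geq 1/(\Delta - 1)$.

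For sharpness I would specialize to a $\Delta$-regular graph $G$, in which case $\mathcal{G}$ is $(\Delta - 1)$-regular and $\mathcal{D} = (\Delta - 1)\id$, so that $\varepsilon^{-1} = (\Delta - 1)\,\rho(\mathcal{A}^{-1})$. The known fact \cite{kotani20002} that every eigenvalue of $\mathcal{A}$ has modulus at least $1$ while $1 \in \sigma(\mathcal{A})$ gives $\rho(\mathcal{A}^{-1}) = 1/\min_\lambda |\lambda| = 1$, so $\varepsilon = 1/(\Delta - 1)$ is attained. The main obstacle is concentrated in the single identity $\|\mathcal{A}^{-1}\|_2 = 1$: the surrounding reductions (reciprocal eigenvalues, the similarity $\mathcal{A}^{-1}\mathcal{D} \sim \mathcal{D}\mathcal{A}^{-1}$, and the diagonal norm) are routine, whereas the value $1$ rests entirely on the spectral lower bound of Lemma \ref{lem:bp-spectrum}, which is the genuinely delicate ingredient and which I establish separately at the end of the section.
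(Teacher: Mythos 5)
Your proposal is correct and follows essentially the same route as the paper's proof: reducing $\varepsilon^{-1}$ to $\rho(\mathcal{D}\mathcal{A}^{-1})$, bounding it by $\|\mathcal{D}\|_2\,\|\mathcal{A}^{-1}\|_2$, computing $\|\mathcal{A}^{-1}\|_2 = 1$ via the PT-symmetry $\mathcal{A}^* = P\mathcal{A}P$ together with Lemma \ref{lem:bp-spectrum}, and establishing sharpness on $\Delta$--regular graphs using the Kotani--Sunada fact that $1 \in \sigma(\mathcal{A})$ and $|\lambda| \geq 1$ for all $\lambda \in \sigma(\mathcal{A})$. Your only departures are expository refinements the paper leaves implicit, such as spelling out the similarity $\mathcal{A}^{-1}\mathcal{D} \sim \mathcal{D}\mathcal{A}^{-1}$ and the identity $(\mathcal{A}P)^{-1} = P\mathcal{A}^{-1}$.
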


\begin{remark}
For the random walk Laplacian of general graphs, the result in Theorem \ref{thm:gap1} doesn't hold. For instance, in the undirected case, $1$ can be an eigenvalue, and the spectral gap from $1$ has been investigated in \cite{petalsbooks}.
\end{remark}

As promised, we now prove the fact that $\min_{\lambda \in \sigma(\mathcal{A}P)} | \lambda |^2 = 1$, which was used in the proof of Theorem \ref{thm:gap1}.
In fact, we compute every single eigenvalue and eigenfunction of $\mathcal{A}P$.
In the following Lemma we continue to assume $\delta \geq 2$ and that $G=(V,E)$ has $M$ edges and $N$ nodes, and we let $\mathcal{G} = (\mathcal{V},\mathcal{E})$ denote its non-backtracking graph.
Part of the following Lemma was stated in \cite{bordenave2018nonbacktracking} albeit without proof.

\begin{lemma}\label{lem:bp-spectrum}
The spectrum of $\mathcal{A}P$ is given by $\{ -1 \} \cup \{ \deg v-1 \}_{v \in V}$, where each positive eigenvalue $d$ has multiplicity equal to the number of nodes $v$ with $\deg v -1= d$, and the multiplicity of $-1$ equals $2M - N$.
\end{lemma}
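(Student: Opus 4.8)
<br>

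The plan is to understand the matrix $\mathcal{A}P$ concretely as an operator on functions on the oriented edges, and then guess-and-verify eigenfunctions by lifting functions from the vertices of $G$. First I would compute the action of $\mathcal{A}P$ explicitly. Recall that $(P\mathbf{x})_{[v,w]} = x_{[w,v]}$, so $P$ reverses orientation, and $\mathcal{A}_{ij} = 1$ iff $\out(e_i) = \inp(e_j)$ and $\inp(e_i)\neq\out(e_j)$ (i.e.\ $e_i \to e_j$ in $\mathcal{NB}(G)$). Composing, for an oriented edge $[v,w]$ I expect
\begin{equation*}
(\mathcal{A}P f)_{[v,w]} = \sum_{\substack{[w,z]\,:\, z\neq v}} f_{[z,w]} = \sum_{z \sim w,\, z\neq v} f_{[z,w]},
\end{equation*}
so $\mathcal{A}P$ sends $f$ to a quantity indexed by $[v,w]$ that sums $f$ over edges \emph{pointing into} $w$ from neighbours other than $v$. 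The key structural observation is that this output depends on the source $v$ only through the excluded term, which is exactly the kind of "sum minus one term" structure that makes vertex-supported ansätze work.

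The main step is to produce the positive eigenvalues $\deg v - 1$. For each vertex $v_0 \in V$ I would try the eigenfunction supported on the edges oriented \emph{into} $v_0$: set $f_{[z,v_0]} = 1$ for every $z \sim v_0$ and $f = 0$ on all other oriented edges. Plugging into the formula above and checking the eigenvalue equation $\mathcal{A}Pf = (\deg v_0 - 1) f$ should work out, because the only nonzero contributions land on edges $[v,w]$ with $w = v_0$, and summing $f_{[z,v_0]} = 1$ over the $\deg v_0 - 1$ admissible neighbours $z \neq v$ gives exactly $\deg v_0 - 1$. This yields one eigenfunction of eigenvalue $\deg v_0 - 1$ per vertex, hence multiplicity equal to the number of vertices with that degree value, matching the claim. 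I would note these $N$ functions are linearly independent (they have disjoint supports across distinct $v_0$ after a small argument, or are supported on disjoint "in-stars").

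The remaining step is the eigenvalue $-1$ with multiplicity $2M - N$. Having found $N$ eigenfunctions above, a dimension count suggests the $-1$ eigenspace fills out the complementary $2M - N$ dimensions. I would exhibit it directly: for any edge $[v,w]$ look for functions satisfying $\mathcal{A}Pf = -f$; the natural candidates are functions supported on a single vertex's in-star that sum to zero there, or more cleanly functions in the kernel of the "summation into $w$" map. Concretely, I expect the $-1$-eigenspace to consist of functions $f$ such that for each vertex $w$ the total $\sum_{z\sim w} f_{[z,w]}$ vanishes in the appropriate sense, giving $2M - N$ constraints complementary to the $N$ "total-mass" directions. The main obstacle I anticipate is precisely this $-1$ part: verifying both that these functions genuinely satisfy $\mathcal{A}Pf = -f$ (not merely that they are annihilated by some map) and that together with the $N$ positive eigenfunctions they span all of $\mathbb{C}^{2M}$, i.e.\ that $\mathcal{A}P$ is diagonalizable with no hidden Jordan blocks. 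I would close this gap by a clean dimension argument: exhibit explicit linearly independent $-1$-eigenfunctions numbering $2M - N$, observe they are independent from the $N$ positive eigenfunctions, and conclude $N + (2M-N) = 2M$ forces the spectrum to be exactly $\{-1\} \cup \{\deg v - 1\}_{v\in V}$ with the stated multiplicities.
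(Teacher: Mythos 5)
Your proposal is correct and follows essentially the same route as the paper: the paper passes to $\mathcal{A}^\top P$ (which has the same spectrum) and reduces the eigenproblem to out-star sums, constructing constant functions on out-stars for the eigenvalues $\deg v - 1$ and zero-sum functions on out-stars for $-1$, then counts $N + (2M-N) = 2M$ independent eigenfunctions exactly as you do. Your version works directly with $\mathcal{A}P$ using in-stars instead of out-stars — a mirror image of the same argument — and the verification you flag as the main obstacle for the $-1$-eigenspace is in fact immediate from your own formula, since $\sum_{z\sim w,\, z\neq v} f_{[z,w]} = \sum_{z\sim w} f_{[z,w]} - f_{[v,w]} = -f_{[v,w]}$ whenever the in-star sums vanish, and independence from the positive-eigenvalue eigenfunctions is automatic because the eigenvalues differ.
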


\begin{proof}
Note that the spectrum of $\mathcal{A}P$ equals the spectrum of $P^\top\mathcal{A}^\top=P\mathcal{A}^\top$, which equals the spectrum of $\mathcal{A}^\top P$. 
From the definitions of $\mathcal{A}$ and $P$ we may compute 
\begin{equation*}
\left( \mathcal{A}^TP \right)_{i,j} = \begin{cases}
1, & \text{if } \inp(e_i) = \inp(e_j), \, \out(e_i) \neq \out(e_j), \\
0, & \text{otherwise}.
\end{cases}
\end{equation*}
With this, the eigenproblem $\mathcal{A}^\top Pf = \lambda f$ simplifies to the system of equations
\begin{equation}\label{eqn:bp-system}
\left( \lambda + 1 \right) f(e) = \sum_{\inp(e') = \inp(e)} f(e'), \text{ for each } e \in \mathcal{V}.
\end{equation}

First fix a node $v \in V$ with degree $d$ and define the function $f_v \colon \mathcal{V} \to \mathbb{R}$ by
\begin{equation*}
f_v(e) \coloneqq \begin{cases}
d, & \text{if }\inp(e) = v \\
0, & \text{otherwise}.
\end{cases}
\end{equation*}
Then $f_v$ satisfies Equation \eqref{eqn:bp-system} with eigenvalue $\lambda = d - 1$, and the set $\left\{ f_v \right\}_{v \in V}$ is linearly independent.
In other words, $\lambda = \deg v-1$ is an eigenvalue of $\mathcal{A}^\top P$, for each node $v$, and each such eigenvalue has multiplicity equal to the number of nodes $v$ with the corresponding degree.

Now fix an oriented edge $[v, w]$ with $v,w \in V$ and suppose $v$ has degree $d$.
Define the function $g_{[v, w]} \colon \mathcal{V} \to \mathbb{R}$ by
\begin{equation*}
g_{[v, w]}(e) \coloneqq \begin{cases}
-1, & \text{if }\inp(e)=v, \out(e)=w \\
\frac{1}{d-1}, & \text{if }\inp(e)=v, \out(e) \neq w \\
0, & \text{otherwise}.
\end{cases}
\end{equation*}
One can see that $g_{[v, w]}$ satisfies equations \eqref{eqn:bp-system} for eigenvalue $\lambda = -1$.
Now fix $v \in V$ and let $w_1, \ldots, w_d$ be its neighbors.
It is clear that the matrix
\begin{equation*}
\left[ g_{[v, w_1]} | \ldots | g_{[v, w_d]} \right]
\end{equation*}
has rank $d-1$.
Furthermore, for two different nodes $v$ and $v'$, the vectors ${g_{[v, w]}}$ and ${g_{[v', w']}}$ are linearly independent.
We have shown that the set $\left\{ g_{[v, w]} \right\}_{v \sim w \in E}$, which contains $2M$ elements, contains exactly ${\sum_v \left( \deg v - 1 \right)} = 2M - N$ linearly independent eigenfunctions of $\lambda = -1$, and we are done.
\end{proof}

\begin{remark}
Since by Lemma \ref{lem:bp-spectrum} all eigenvalues are $\neq 0$, we can conclude that if $f$ is an eigenfunction of $\mathcal{A}P$ with eigenvalue $\lambda$, then $Pf$ is an eigenfunction of $P\mathcal{A}$ with the same eigenvalue.
Therefore, the preceding Lemma also describes the eigenvalues and eigenfunctions of $P\mathcal{A}$.
\end{remark}

In the proof of Theorem \ref{thm:gap1} we have shown that, if $G$ is $\Delta$--regular, then $\varepsilon=\frac{1}{\Delta-1}$, but we can also say something more. In fact, since it is known that $\pm 1$ are always eigenvalues of $\mathcal{A}$, it follows that, if $G$ is $\Delta$--regular, then $1\pm \frac{1}{\Delta-1}$ are two real eigenvalues of $\mathcal{L}$. A natural question is whether regular graphs are the only graphs for which $\varepsilon=\frac{1}{\Delta-1}$, but the answer is no. As shown in Theorem \ref{thm:dcycle} in the next section, for instance, also the presence of a $\Delta$--regular cycle in the graph $G$ produces the eigenvalues $1\pm \frac{1}{\Delta-1}$ for $\mathcal{L}$.

\section{Cycles}\label{section:cycles}
We now study the signature that the presence of cycles can leave in the spectrum of the non-backtracking Laplacian. Again, throughout this section we fix a simple graph $G=(V,E)$ with minimum vertex degree $\geq 2$. We let $\mathcal{G}=(\mathcal{V},\mathcal{E})$ be its non-backtracking graph, and we let $\mathcal{L}$ be its non-backtracking Laplacian.\newline

Observe that, given $c_1,\ldots,c_\ell \in V$, we have that (Figure \ref{fig:cycles})
\begin{align*}
   \{c_1,\ldots,c_\ell\} \text{ is a simple chordless cycle in }G \iff & \mathcal{C}\coloneqq\{[c_1, c_2],\ldots,[c_\ell, c_1]\} \\
   &\text{is a simple chordless cycle in }\mathcal{G}\\
     \iff &\text{both }\mathcal{C} \text{ and }\mathcal{C}^{-1}\coloneqq\{[c_1, c_\ell],\ldots,[c_2, c_1]\} \\
    & \text{are simple chordless cycles in }\mathcal{G}.
\end{align*}

\begin{figure}[h]
    \centering
    \includegraphics[width=12cm]{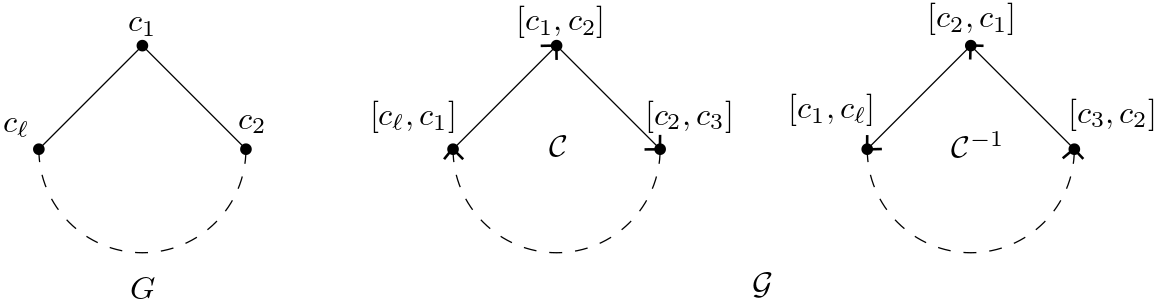}
    \caption{The setting of Section \ref{section:cycles}.}
    \label{fig:cycles}
\end{figure}

Moreover, $\mathcal{C}\cap \mathcal{C}^{-1}=\emptyset$ and there are no edges between $\mathcal{C}$ and $\mathcal{C}^{-1}$ in $\mathcal{G}$. \newline

Throughout the section, we also fix $\mathcal{C}$ and $\mathcal{C}^{-1}$ as above. Our first result concerns a structural property of $\mathcal{G}$ related to the vertices outside $\mathcal{C}\sqcup \mathcal{C}^{-1}$:

\begin{lemma}\label{lemma:outC}
If  $e\in \mathcal{V}\setminus\left(\mathcal{C}\sqcup \mathcal{C}^{-1}\right)$, then exactly one of the following holds:
\begin{enumerate}
    \item There are no edges between $e$ and $\mathcal{C}\sqcup \mathcal{C}^{-1}$ in $\mathcal{G}$;
    \item There exists $i\in \{1,\ldots,\ell\}$ such that the only edge between $e$ and $\mathcal{C}$ is $(e\rightarrow [c_{i}, c_{i+1}])$, while the only edge between $e$ and $\mathcal{C}^{-1}$ is $(e\rightarrow [c_{i}, c_{i-1}])$ (Figure \ref{fig:Lemmacycles});
     \item There exists $i\in \{1,\ldots,\ell\}$ such that the only edge between $e$ and $\mathcal{C}$ is $([c_{i-1}, c_{i}]\rightarrow e)$, while the only edge between $e$ and $\mathcal{C}^{-1}$ is $([c_{i+1}, c_{i}]\rightarrow e)$.
\end{enumerate}
\end{lemma}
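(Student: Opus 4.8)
The plan is to write $e = [v,w]$ and read off, directly from the adjacency rule of $\mathcal{G}$, all arcs joining $e$ to $\mathcal{C}$ and to $\mathcal{C}^{-1}$, organizing the argument by how many endpoints of $e$ lie on the cycle $\{c_1,\dots,c_\ell\}$. Recall that an arc $e_i \to e_j$ exists in $\mathcal{G}$ exactly when $\out(e_i) = \inp(e_j)$ and $\inp(e_i) \neq \out(e_j)$. The first observation I would make is that \emph{at most one} endpoint of $e$ can be a cycle vertex: if both $v$ and $w$ were among the $c_k$, then the adjacency $v \sim w$ in $G$ together with chordlessness of the cycle would force $w = c_{i\pm 1}$ when $v = c_i$, whence $e \in \mathcal{C} \sqcup \mathcal{C}^{-1}$, contradicting $e \in \mathcal{V} \setminus (\mathcal{C} \sqcup \mathcal{C}^{-1})$. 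This is the one place where chordlessness is essential, and it splits the proof into three mutually exclusive and exhaustive cases according to whether neither, $\out(e)$, or $\inp(e)$ is a cycle vertex.

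In the case where neither $v$ nor $w$ lies on the cycle, I would note that any arc from $e$ into $\mathcal{C}$ or $\mathcal{C}^{-1}$ would force $\out(e) = w$ to be some $c_i$, and any arc into $e$ would force $\inp(e) = v$ to be some $c_{i\pm 1}$; both are impossible, giving alternative~1. In the case $\out(e) = w = c_i$ (with $v$ off the cycle), the out-arcs of $e$ go precisely to the edges $[c_i, z]$ with $z \neq v$; among the cycle edges incident to $c_i$ these are exactly $[c_i, c_{i+1}] \in \mathcal{C}$ and $[c_i, c_{i-1}] \in \mathcal{C}^{-1}$, both legitimate since $v \notin \{c_{i-1}, c_{i+1}\}$, while no arc can point into $e$ (that would need $v = \inp(e)$ on the cycle). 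This is exactly alternative~2. The case $\inp(e) = v = c_i$ is symmetric: the only arcs are the in-arcs $[c_{i-1}, c_i] \to e$ and $[c_{i+1}, c_i] \to e$, yielding alternative~3.

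Finally, for the ``exactly one'' clause I would record that the three cases are distinguished by a fixed property of the oriented edge $e$, namely which of $\inp(e)$, $\out(e)$ is a cycle vertex, so they cannot overlap; in particular alternatives~2 and~3 are incompatible because together they would require both $\out(e)$ and $\inp(e)$ to be cycle vertices, again excluded by chordlessness. The computation is elementary once the adjacency rule is unwound, and I expect the only real care to be needed in (i) invoking chordlessness both to kill the two-cycle-endpoints configuration and to guarantee that $c_{i-1}, c_{i+1}$ are the \emph{only} cycle neighbours of $c_i$, so that the arcs produced are exactly the claimed ones with no spurious extras, and (ii) keeping the orientation of each arc straight, since the distinction between alternatives~2 and~3 is entirely a matter of arc direction.
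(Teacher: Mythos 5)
Your proof is correct, and it arrives at the same two-arc configurations as the paper, but the case decomposition is genuinely different. The paper organizes the argument around the arcs themselves: it assumes some arc between $e$ and $\mathcal{C}\sqcup\mathcal{C}^{-1}$ exists, say $e\rightarrow[c_i,c_{i+1}]$, deduces $\out(e)=c_i$ and $\inp(e)\neq c_{i-1},c_{i+1}$ from $e\notin\mathcal{C}\sqcup\mathcal{C}^{-1}$, concludes that the companion arc $e\rightarrow[c_i,c_{i-1}]$ must also be present and that no further arcs can occur, and then handles incoming arcs symmetrically. You instead classify $e=[v,w]$ upfront by which of its endpoints lies on the cycle, using chordlessness to exclude the configuration where both do. Your decomposition buys two things. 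First, your three cases are visibly mutually exclusive and exhaustive, so the ``exactly one'' clause comes for free, whereas the paper obtains exclusivity only implicitly (each of its two nontrivial cases ends by asserting that no other arcs exist, which in particular forbids the other case). Second, you make explicit a step the paper compresses into ``it is clear that'': to rule out arcs from $\mathcal{C}\sqcup\mathcal{C}^{-1}$ \emph{into} $e$ when $\out(e)=c_i$, one needs $\inp(e)$ to avoid \emph{every} cycle vertex, not merely $c_{i-1}$ and $c_{i+1}$; if $\inp(e)$ were some other $c_j$, then $e$ would be an orientation of a chord, and arcs such as $[c_{j-1},c_j]\rightarrow e$ would appear. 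Chordlessness is exactly what excludes this, and your opening observation that at most one endpoint of $e$ can be a cycle vertex isolates that use of the hypothesis cleanly.
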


\begin{figure}[h]
    \centering
    \includegraphics[width=7cm]{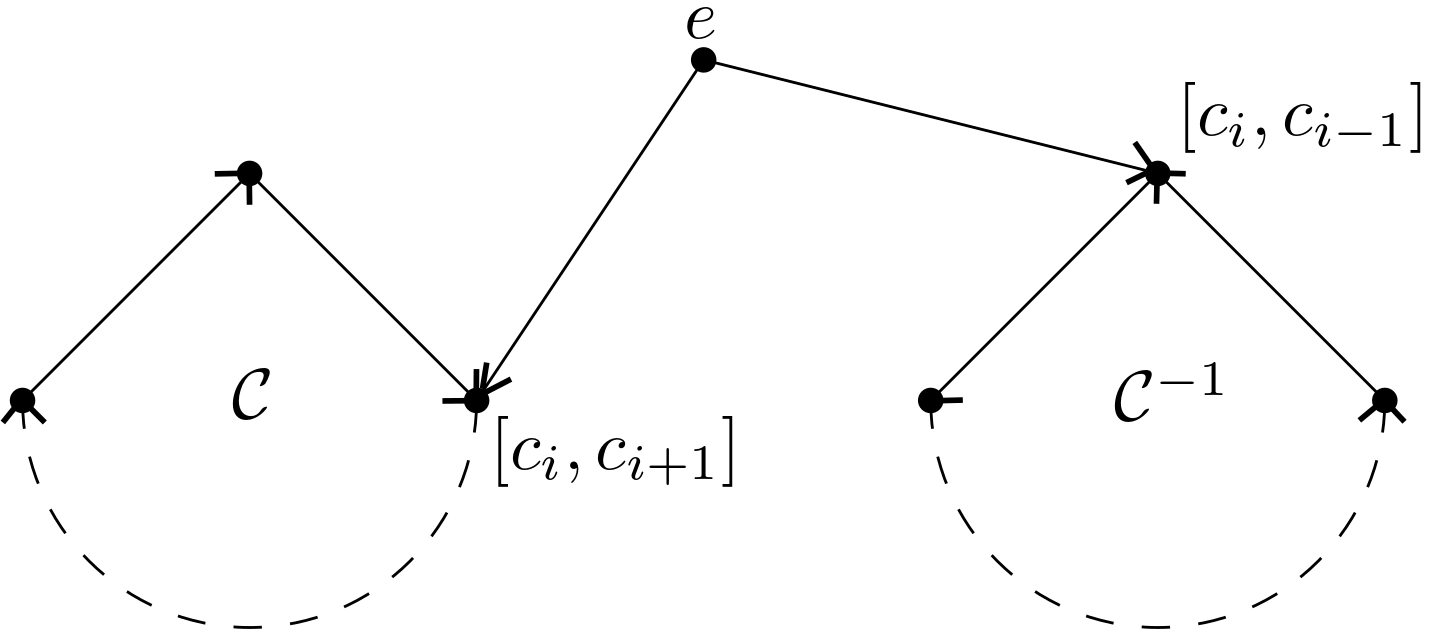}
    \caption{An illustration of Lemma \ref{lemma:outC}.}
    \label{fig:Lemmacycles}
\end{figure}
\begin{proof}
 Assume that there exists a directed edge going from $e$ to $\mathcal{C}\sqcup \mathcal{C}^{-1}$ in $\mathcal{G}$ and assume, without loss of generality, that this directed edge goes to $\mathcal{C}$. Then, there exists $i\in \{1,\ldots,\ell\}$ such that $(e\rightarrow [c_{i}, c_{i+1}])\in \mathcal{E}$. This implies that $\out(e)=c_i$ and $\inp(e)\neq c_{i-1},c_{i+1}$, since we are assuming that $e\notin \mathcal{C}\sqcup \mathcal{C}^{-1}$. Therefore, $(e\rightarrow[c_{i}, c_{i-1}])\in \mathcal{E}$. Moreover, since $\out(e)=c_i$ and $\inp(e)\neq c_{i-1},c_{i+1}$, it is clear that there cannot be other edges between $e$ and $\mathcal{C}\sqcup \mathcal{C}^{-1}$, other than $(e\rightarrow[c_{i}, c_{i+1}])$ and $(e\rightarrow[c_{i}, c_{i-1}])$.\newline 
 In the same way one can show that, if there exists a directed edge going from $\mathcal{C}\sqcup \mathcal{C}^{-1}$ to $e$ in $\mathcal{G}$, then there exists $i\in \{1,\ldots,\ell\}$ such that the only edge between $e$ and $\mathcal{C}$ is $([c_{i-1}, c_{i}]\rightarrow e)$, while the only edge between $e$ and $\mathcal{C}^{-1}$ is $([c_{i+1}, c_{i}]\rightarrow e)$. This proves the claim.
\end{proof}

The next two results will allow us to prove Theorem \ref{thm:dcycle} below, which, as anticipated in the previous section, shows that the presence of a $d$--regular cycle in the graph $G$ produces the eigenvalues $1\pm \frac{1}{d-1}$ for $\mathcal{L}$.

\begin{lemma}\label{lemma:conditions}
Let $\mu\in \mathbb{R}_+$ and let $f:\mathcal{V}\rightarrow \mathbb{R}$, $f\neq \mathbf{0}$ be a function whose support is contained in $\mathcal{C}\sqcup \mathcal{C}^{-1}$. Then, $(1\mp \frac{1}{\mu},f)$ is an eigenpair for $\mathcal{L}$ if and only if the following conditions hold:
\begin{enumerate}
    \item \begin{equation*}
    f([c_{i-1}, c_i])=\pm\frac{\mu}{\deg c_i-1}\cdot f([c_i, c_{i+1}]), \quad \forall i\in\{1,\ldots,\ell\};
\end{equation*}
    \item \begin{equation*}
    f([c_{i+1}, c_i])=\pm\frac{\mu}{\deg c_i-1}\cdot f([c_i, c_{i-1}]), \quad \forall i\in\{1,\ldots,\ell\};
\end{equation*}
\item 
\begin{equation*}
       f([c_i, c_{i+1}])+f([c_i, c_{i-1}])=0, \quad \forall i\in\{1,\ldots,\ell\}\,:\, \deg c_i>2.
   \end{equation*} \end{enumerate}
\end{lemma}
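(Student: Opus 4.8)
The plan is to substitute the candidate eigenvalue $\lambda = 1\mp\frac{1}{\mu}$ directly into the eigenvalue equation for the random walk Laplacian recorded earlier, namely
$$(1-\lambda)f(\omega) = \frac{1}{\deg\omega}\sum_{\omega\rightarrow\tau}f(\tau),\qquad\forall\omega\in\mathcal{V},$$
and to verify it separately on the three parts of the partition $\mathcal{V} = \mathcal{C}\sqcup\mathcal{C}^{-1}\sqcup\big(\mathcal{V}\setminus(\mathcal{C}\sqcup\mathcal{C}^{-1})\big)$. With $1-\lambda = \pm\frac{1}{\mu}$ and using the identity $\deg_{\mathcal{G}}e = \deg_G(\out(e))-1$, the whole statement reduces to reading off what this single equation says at each vertex and collecting the resulting constraints.

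First I would treat $\omega\in\mathcal{C}$. For $\omega=[c_i,c_{i+1}]$, its out-neighbors in $\mathcal{G}$ are the edges $[c_{i+1},z]$ with $z\sim c_{i+1}$ and $z\neq c_i$; the only such out-neighbor lying in $\mathcal{C}\sqcup\mathcal{C}^{-1}$ (hence the only one on which $f$ may be nonzero) is $[c_{i+1},c_{i+2}]$, since the backtracking edge $[c_{i+1},c_i]\in\mathcal{C}^{-1}$ is excluded. The equation therefore collapses to $\pm\frac{1}{\mu}f([c_i,c_{i+1}]) = \frac{1}{\deg c_{i+1}-1}f([c_{i+1},c_{i+2}])$, which, after multiplying by $\pm\mu$ and reindexing, is precisely condition 1. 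The symmetric computation on $\omega\in\mathcal{C}^{-1}$, for $\omega=[c_i,c_{i-1}]$ whose unique relevant out-neighbor is $[c_{i-1},c_{i-2}]$, yields condition 2 in the same way.

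The main work — and the only genuinely nonroutine step — is the analysis of $\omega\notin\mathcal{C}\sqcup\mathcal{C}^{-1}$, where $f(\omega)=0$ forces the right-hand side $\sum_{\omega\rightarrow\tau}f(\tau)$ to vanish. Here I would invoke Lemma \ref{lemma:outC}, which classifies such an $\omega=e$ into exactly one of three cases. In Case 1 ($e$ has no edges to the cycles) and Case 3 (its only edges to $\mathcal{C}\sqcup\mathcal{C}^{-1}$ are incoming), $e$ has no out-neighbor in the support, so the sum is automatically $0$ and no constraint arises. In Case 2, $e$ has exactly the two out-neighbors $[c_i,c_{i+1}]$ and $[c_i,c_{i-1}]$ in the support, so the equation becomes $f([c_i,c_{i+1}])+f([c_i,c_{i-1}])=0$.

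Finally I would match this to condition 3 by pinning down when Case 2 actually occurs: an external $e=[z,c_i]$ realizing it exists if and only if $c_i$ has a neighbor $z\notin\{c_{i-1},c_{i+1}\}$, i.e. if and only if $\deg c_i>2$, and all such $e$ for a fixed $i$ impose the identical equation. Thus the outside vertices contribute exactly the constraints of condition 3, indexed over $\{i:\deg c_i>2\}$, while the cycle indices with $\deg c_i=2$ yield nothing. Conversely, conditions 1--3 are precisely the complete list of eigenvalue equations over all $\omega\in\mathcal{V}$, so they hold if and only if $(1\mp\frac{1}{\mu},f)$ is an eigenpair. The point demanding care is exactly this last bookkeeping: confirming via Lemma \ref{lemma:outC} that no outside vertex produces any equation beyond those of Case 2, and that the degree-two vertices on the cycle drop out of condition 3.
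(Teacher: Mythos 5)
Your proposal is correct and follows essentially the same route as the paper's proof: rewrite the eigenvalue equation of $\mathcal{L}$ edgewise using $\deg_{\mathcal{G}}e=\deg_G(\out(e))-1$, read off conditions 1 and 2 on $\mathcal{C}\sqcup\mathcal{C}^{-1}$, and use Lemma \ref{lemma:outC} to show the outside vertices contribute exactly condition 3, with the existence of such a vertex tied to $\deg c_i>2$. Your write-up is somewhat more explicit than the paper's in checking the three cases of Lemma \ref{lemma:outC}, but there is no substantive difference.
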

\begin{proof}
 It is clear that $(1\mp\frac{1}{\mu},f)$ is an eigenpair for $\mathcal{L}$ if and only if 
\begin{equation}\label{eq:f-mu}
    f([v, w])=\pm\frac{\mu}{\deg w-1}\cdot \left(\sum_{[w, z], \, z\neq v}f([w, z])\right), \quad \forall [v, w].
\end{equation}Since, by assumption, the support of $f$ is contained in $\mathcal{C}\sqcup \mathcal{C}^{-1}$, it is clear that \eqref{eq:f-mu} applied to the vertices in $\mathcal{C}\sqcup \mathcal{C}^{-1}$ gives the first two conditions of the claim.\newline 
Now, let $e=[v, w]\in \mathcal{V}\setminus\left(\mathcal{C}\sqcup \mathcal{C}^{-1}\right)$. If there are no directed edges from $e$ to $\mathcal{C}\sqcup \mathcal{C}^{-1}$ in $\mathcal{G}$, then \eqref{eq:f-mu} is trivially satisfied. Otherwise, by Lemma \ref{lemma:outC}, there exists $i\in \{1,\ldots,\ell\}$ such that the only edge from $e$ to $\mathcal{C}$ is $(e\rightarrow [c_{i}, c_{i+1}])$, while the only edge from $e$ to $\mathcal{C}^{-1}$ is $(e\rightarrow [c_{i},c_{i-1}])$. This happens if and only if $\out(e)=c_i$ and, in this case, \eqref{eq:f-mu} becomes 
   \begin{equation*}
       f([c_{i}, c_{i+1}])+f([c_{i}, c_{i-1}])=0.
   \end{equation*}
   Since an element $e\in \mathcal{V}\setminus\left(\mathcal{C}\sqcup \mathcal{C}^{-1}\right)$ such that $\out(e)=c_i$ exists if and only if $\deg c_i>2$, this proves the claim.

\end{proof}
 
 \begin{corollary}\label{cor:g}
 Let $\mu\in \mathbb{R}_+$ and let $f:\mathcal{V}\rightarrow \mathbb{R}$, $f\neq \mathbf{0}$ be a function whose support is contained in $\mathcal{C}\sqcup \mathcal{C}^{-1}$. If $(1- \frac{1}{\mu},f)$ is an eigenpair for $\mathcal{L}$ and $\mathcal{C}$ has even length, then also $1+ \frac{1}{\mu}$ is an eigenvalue for $\mathcal{L}$ and it has an eigenfunction whose support is contained in $\mathcal{C}\sqcup \mathcal{C}^{-1}$.
 \end{corollary}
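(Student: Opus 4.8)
The plan is to invoke Lemma \ref{lemma:conditions} in both directions. The hypothesis that $(1-\frac1\mu,f)$ is an eigenpair tells us, via the upper-sign version of that lemma, that $f$ satisfies conditions 1--3 with the $+$ sign; and to produce the eigenvalue $1+\frac1\mu$ it suffices, by the lower-sign version, to exhibit a nonzero $g$ supported on $\mathcal{C}\sqcup\mathcal{C}^{-1}$ satisfying the same three conditions but with the $-$ sign. So the whole problem reduces to manufacturing $g$ from $f$ by a sign twist that flips the signs in conditions 1 and 2 while leaving condition 3 intact.

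The twist I would use is to multiply the value of $f$ on each oriented edge of $\mathcal{C}\sqcup\mathcal{C}^{-1}$ by $(-1)^k$, where $c_k$ is its input. Concretely, set
\[
g(e)\coloneqq(-1)^k f(e)\ \text{ if }\ e\in\mathcal{C}\sqcup\mathcal{C}^{-1}\ \text{ with }\ \inp(e)=c_k,
\]
and $g\coloneqq\mathbf{0}$ elsewhere. Each cycle vertex $c_k$ is the input of exactly two edges of $\mathcal{C}\sqcup\mathcal{C}^{-1}$, namely $[c_k,c_{k+1}]\in\mathcal{C}$ and $[c_k,c_{k-1}]\in\mathcal{C}^{-1}$, and both receive the same factor $(-1)^k$; this is exactly what makes condition 3 survive, since that condition couples precisely $f([c_i,c_{i+1}])$ and $f([c_i,c_{i-1}])$, so the common factor $(-1)^i$ pulls out and $g([c_i,c_{i+1}])+g([c_i,c_{i-1}])=(-1)^i\big(f([c_i,c_{i+1}])+f([c_i,c_{i-1}])\big)=0$. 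By contrast, conditions 1 and 2 relate edges whose inputs are the consecutive cycle vertices $c_{i-1}$ and $c_i$; these carry opposite signs, $(-1)^{i-1}=-(-1)^i$, so rearranging converts the factor $+\frac{\mu}{\deg c_i-1}$ of the lemma into $-\frac{\mu}{\deg c_i-1}$, which is precisely the $-$-sign relation needed for the eigenvalue $1+\frac1\mu$.

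The step I expect to be the crux is checking that $g$ is well defined, i.e. that the rule $k\mapsto(-1)^k$ is consistent around the cycle. Since the indices are read modulo $\ell$, consistency of $(-1)^k$ requires $(-1)^{k+\ell}=(-1)^k$, that is $(-1)^\ell=1$, which is exactly the hypothesis that $\mathcal{C}$ has \emph{even} length; were $\ell$ odd, the sign twist would be double-valued at the wrap-around. Once well-definedness is secured, the verification of conditions 1--3 for $g$ is the routine sign bookkeeping sketched above, $g$ is nonzero because $f$ is and $|g|=|f|$ pointwise, and its support lies in $\mathcal{C}\sqcup\mathcal{C}^{-1}$ by construction. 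Applying the lower-sign direction of Lemma \ref{lemma:conditions} to $g$ then yields that $(1+\frac1\mu,g)$ is an eigenpair for $\mathcal{L}$, completing the proof.
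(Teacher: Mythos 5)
Your proof is correct and is essentially the paper's own argument: the paper defines $g([c_i, v]) \coloneqq f([c_i, v])$ for $i$ even and $g([c_i, v]) \coloneqq -f([c_i, v])$ for $i$ odd (zero off $\mathcal{C}\sqcup\mathcal{C}^{-1}$), which is exactly your sign twist $(-1)^k$ by input vertex, and then invokes Lemma \ref{lemma:conditions} for $1+\frac{1}{\mu}$. Your explicit verification of the three conditions, and your observation that the even length of $\mathcal{C}$ is precisely what makes the alternating sign consistent at the cyclic wrap-around, supply the details the paper leaves implicit.
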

 \begin{proof}
 Let $g:\mathcal{V}\rightarrow \mathbb{R}$ be defined by $g(e)\coloneqq0$ if $e\in \mathcal{V}\setminus\left(\mathcal{C}\sqcup \mathcal{C}^{-1}\right)$ and
 \begin{equation*}
     g([c_i, v])\coloneqq\begin{cases}f([c_i, v]), & \text{ if } i \text{ is even}\\
     -f([c_i, v]), & \text{ if } i \text{ is odd},
     \end{cases}
 \end{equation*}for $i=1,\ldots,\ell.$ Then, $g$ satisfies the conditions in Lemma \ref{lemma:conditions} for $1+ \frac{1}{\mu}$.
 \end{proof}
 
 The previous results allow us to prove the following 
 \begin{theorem}\label{thm:dcycle}
Let $d\in\mathbb{N}_{>1}$. If there exists a simple chordless cycle in $G$ whose vertices have constant degree $d$, then $1-\frac{1}{d-1}$ is an eigenvalue for $\mathcal{L}$. If, additionally, such a cycle is even, then also $1+\frac{1}{d-1}$ is an eigenvalue for $\mathcal{L}$.\newline 
Moreover, the geometric multiplicity of $1-\frac{1}{d-1}$ for $\mathcal{L}$ is larger than or equal to the number of $d$--regular simple chordless cycles in $G$, while the geometric multiplicity of $1+\frac{1}{d-1}$ for $\mathcal{L}$ is larger than or equal to the number of $d$--regular even simple chordless cycles in $G$.
 \end{theorem}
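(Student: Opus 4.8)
The plan is to produce, for each admissible cycle, one explicit eigenfunction supported on the paired oriented cycles $\mathcal{C}\sqcup\mathcal{C}^{-1}$ via Lemma \ref{lemma:conditions}, and then to count how many of these are linearly independent. For existence of $1-\frac{1}{d-1}$, I fix a $d$-regular simple chordless cycle $C=(c_1,\ldots,c_\ell)$ and set $\mu=d-1$, so the target eigenvalue is $1-\frac{1}{\mu}=1-\frac{1}{d-1}$. I look for $f$ supported on $\mathcal{C}\sqcup\mathcal{C}^{-1}$ and apply Lemma \ref{lemma:conditions} with the top sign. Because every cycle vertex has degree $d$, the factor $\frac{\mu}{\deg c_i-1}$ collapses to $1$, so conditions~1 and~2 say precisely that $f$ is constant, say $=a$, on the forward edges $[c_i,c_{i+1}]$ and constant, say $=b$, on the reverse edges $[c_i,c_{i-1}]$. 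If $d=2$ then condition~3 is vacuous and any nonzero choice works (here $1-\frac{1}{d-1}=0$, which is anyway always an eigenvalue); if $d>2$ then condition~3 forces $b=-a$, and $a=1,\,b=-1$ gives a nonzero $f$. Hence $1-\frac{1}{d-1}\in\sigma(\mathcal{L})$.

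For the even case I would feed this $f$ directly into Corollary \ref{cor:g}: since $\mathcal{C}$ has even length, it outputs an eigenfunction supported on $\mathcal{C}\sqcup\mathcal{C}^{-1}$ for the eigenvalue $1+\frac{1}{d-1}$. Equivalently, one reruns Lemma \ref{lemma:conditions} with the bottom sign, where conditions~1--2 now demand the alternation $f([c_{i-1},c_i])=-f([c_i,c_{i+1}])$ around the cycle; this closes up consistently exactly when $\ell$ is even, which explains the parity hypothesis.

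For the multiplicity statements I would collect, over all $d$-regular chordless cycles $C_1,\ldots,C_k$, the eigenfunctions $f_{C_1},\ldots,f_{C_k}$ for $1-\frac{1}{d-1}$ (and over the even ones the corresponding functions for $1+\frac{1}{d-1}$), and try to show each family is linearly independent; their span then sits inside the relevant eigenspace and yields the claimed lower bounds on geometric multiplicity. When the cycles are pairwise edge-disjoint this is immediate: the support of $f_{C_j}$ is exactly the $2\ell_j$ oriented edges of $C_j$, the supports are then disjoint, and a single nonzero coordinate of each $f_{C_j}$ certifies independence.

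The main obstacle is the case of cycles that share edges, where the supports of the $f_{C_j}$ overlap and a coordinatewise argument no longer isolates a single cycle. For $d>2$ one sees from the construction that $f_C$ is antisymmetric under edge reversal, i.e.\ it is exactly the signed indicator (the circulation) of $C$; consequently linear independence of $f_{C_1},\ldots,f_{C_k}$ is \emph{equivalent} to linear independence of $C_1,\ldots,C_k$ as elements of the real cycle space of $G$. This is the crux on which the multiplicity bound rests: one must argue that the $d$-regular chordless cycles under consideration are independent in the cycle space — for instance by isolating, for each cycle, an oriented edge used by no other, or by an induction that peels off cycles one at a time. I expect essentially all the genuine work to concentrate here, and this is precisely the step where the structural hypotheses must be exploited with care, since overlapping supports are exactly what can make the naive per-cycle count too optimistic.
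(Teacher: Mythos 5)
Your two existence arguments are correct and coincide with the paper's own proof: the paper takes $f\equiv +1$ on $\mathcal{C}$, $\equiv -1$ on $\mathcal{C}^{-1}$, $\equiv 0$ elsewhere, verifies the conditions of Lemma \ref{lemma:conditions} with $\mu=d-1$ (your observation that the factors $\frac{\mu}{\deg c_i-1}$ collapse to $1$ is exactly that verification), and obtains $1+\frac{1}{d-1}$ in the even case from Corollary \ref{cor:g}, just as you do.

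The multiplicity claims are where your proposal stops, and the obstacle you isolated is not merely the hard part: it is insurmountable, because the independence you would need --- and the theorem's multiplicity bound itself --- are false in general. Your reduction is right: for $d>2$ the eigenfunction attached to a cycle is antisymmetric under edge reversal, hence is the circulation of that cycle, and independence of these eigenfunctions is equivalent to independence of the cycles in the real cycle space of $G$. Now take $G=K_4$. Its simple chordless cycles are exactly its four triangles, all $3$-regular, but the cycle space has dimension $M-N+1=3$; concretely, with suitable orientations,
\begin{equation*}
f_{C_{123}}-f_{C_{124}}+f_{C_{134}}-f_{C_{234}}=0 .
\end{equation*}
Moreover, no eigenfunctions outside this construction can repair the count: for a $(q+1)$-regular graph the eigenvalues of $\mathcal{A}$ are the roots of $x^2-\alpha x+q$ for $\alpha\in\sigma(A)$, together with $\pm1$ each of multiplicity $M-N$ \cite{kotani20002}; for $K_4$ (where $q=2$, $\sigma(A)=\{3,-1,-1,-1\}$, $M-N=2$) the eigenvalue $1$ therefore has algebraic multiplicity $3$, so the eigenvalue $1-\frac{1}{d-1}=\frac{1}{2}$ of $\mathcal{L}(K_4)=\id-\tfrac12\mathcal{A}(K_4)$ has geometric multiplicity at most $3<4$. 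The even-case bound fails as well: $K_{3,3}$ has nine $3$-regular even chordless cycles, while the eigenvalue $\frac{3}{2}$ of its non-backtracking Laplacian has multiplicity $4$. So the gap you flagged cannot be closed by any argument; note that the paper's own proof disposes of this step with the single sentence that the functions ``are linearly independent if they are defined for distinct cycles'', which is precisely the false assertion. What your analysis actually establishes, and what the statement should say, is that the geometric multiplicity of $1-\frac{1}{d-1}$ is at least the dimension of the span of the relevant cycles in the cycle space of $G$; in particular, the stated bound is correct for families of pairwise edge-disjoint cycles, as you observe.
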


 \begin{proof}
 Given a cycle whose vertices have constant degree $d$, let $f:\mathcal{V}\rightarrow \mathbb{R}$ be defined by
 \begin{equation*}
     f(e)\coloneqq\begin{cases}1 & \text{ if }e\in\mathcal{C}\\
     -1 & \text{ if }e\in\mathcal{C}^{-1}\\
     0 & \text{ otherwise.}\end{cases}
 \end{equation*}
  Then, $f$ satisfies the conditions in Lemma \ref{lemma:conditions} for the eigenvalue $1-\frac{1}{d-1}$, implying that $(1-\frac{1}{d-1},f)$ is an eigenpair for $\mathcal{L}$. This proves the first claim for $1-\frac{1}{d-1}$, and the first claim for $1+\frac{1}{d-1}$ then follows from Corollary \ref{cor:g}.\newline 
  The second claim follows from the fact that the above functions are linearly independent if they are defined for distinct cycles.
 \end{proof}
 
 Similarly, we also prove the following result.
 
  \begin{theorem}\label{thm:d}
Let $d\in\mathbb{N}_{>2}$. If there exists a simple chordless cycle of length $\ell$ in $G$ such that one vertex has degree $d$ while all other vertices have degree $2$, then $1-\frac{1}{\sqrt[\ell]{d-1}}$ is an eigenvalue for $\mathcal{L}$. If, additionally, such cycle is even, then also $1+\frac{1}{\sqrt[\ell]{d-1}}$ is an eigenvalue for $\mathcal{L}$.\newline 
Moreover, the geometric multiplicity of $1-\frac{1}{\sqrt[\ell]{d-1}}$ for $\mathcal{L}$ is larger than or equal to the number of such cycles in $G$, while the geometric multiplicity of $1+\frac{1}{\sqrt[\ell]{d-1}}$ for $\mathcal{L}$ is larger than or equal to the number of such even cycles in $G$.
 \end{theorem}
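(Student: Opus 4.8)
The plan is to follow the template of Theorem~\ref{thm:dcycle}: for each qualifying cycle I will write down an explicit real function $f$ supported on $\mathcal{C}\sqcup\mathcal{C}^{-1}$ and verify the three conditions of Lemma~\ref{lemma:conditions} with $\mu=\sqrt[\ell]{d-1}$, so that $(1-\tfrac{1}{\sqrt[\ell]{d-1}},f)$ becomes an eigenpair of $\mathcal{L}$. The new feature compared with the $d$--regular case is that a constant $\pm 1$ assignment no longer works: with a single high-degree vertex on the cycle, condition~1 of Lemma~\ref{lemma:conditions} forces the value of $f$ to change by a factor $\mu$ at every step, so $f$ must decay geometrically as one travels around the cycle.

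First I would relabel the cycle as $c_1,\dots,c_\ell$ so that $c_1$ is the unique vertex of degree $d$ and $c_2,\dots,c_\ell$ all have degree $2$, reading all indices cyclically with $c_{\ell+1}=c_1$ and $c_0=c_\ell$. On the oriented edges of $\mathcal{C}$ I set
\[
f([c_i,c_{i+1}]):=\mu^{\ell-i},\qquad i=1,\dots,\ell,
\]
and on those of $\mathcal{C}^{-1}$ I set
\[
f([c_i,c_{i-1}]):=-\mu^{\,i-2},\qquad i=2,\dots,\ell+1,
\]
with $f:=0$ off $\mathcal{C}\sqcup\mathcal{C}^{-1}$.

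Next I would check conditions~1 and~2 of Lemma~\ref{lemma:conditions}. At each degree-$2$ vertex $c_i$ the relevant factor $\mu/(\deg c_i-1)$ equals $\mu$, so both conditions reduce to the plain geometric recurrences satisfied by the powers of $\mu$ above; the only nontrivial transition is at $c_1$, where the factor is $\mu/(d-1)$. Propagating condition~1 once around $\mathcal{C}$ (and likewise condition~2 around $\mathcal{C}^{-1}$) produces a single closure relation of the form $(\text{value})=\tfrac{\mu^\ell}{d-1}(\text{value})$, which is consistent precisely because $\mu^\ell=d-1$; this is exactly what forces $\mu=\sqrt[\ell]{d-1}$ and is the source of the radical in the statement. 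Condition~3 is active only at the unique vertex of degree $>2$, namely $c_1$, where it reads $f([c_1,c_2])+f([c_1,c_\ell])=0$, i.e. $\mu^{\ell-1}+(-\mu^{\ell-1})=0$; this holds by the sign normalization chosen between the $\mathcal{C}$-part and the $\mathcal{C}^{-1}$-part of $f$. Hence $(1-\tfrac{1}{\sqrt[\ell]{d-1}},f)$ is an eigenpair.

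Finally, the even case follows at once by applying Corollary~\ref{cor:g} with $\mu=\sqrt[\ell]{d-1}$, which yields $1+\tfrac{1}{\sqrt[\ell]{d-1}}$ as an eigenvalue with eigenfunction again supported on $\mathcal{C}\sqcup\mathcal{C}^{-1}$; the stated lower bounds on geometric multiplicity then follow, exactly as in Theorem~\ref{thm:dcycle}, from the linear independence of the functions attached to distinct (even) cycles. I expect the main obstacle to be the closure computation: one must verify that one and the same scalar $\mu$ simultaneously closes up both the $\mathcal{C}$-recurrence and the $\mathcal{C}^{-1}$-recurrence around the loop, and that the unique admissible value is the $\ell$-th root $\sqrt[\ell]{d-1}$. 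The remaining bookkeeping---that condition~3 imposes nothing at the degree-$2$ vertices and exactly one resolvable constraint at $c_1$, fixing the relative scale of the two halves of $f$---is routine.
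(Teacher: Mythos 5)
Your proposal is correct and follows essentially the same route as the paper: both apply Lemma \ref{lemma:conditions} with $\mu=\sqrt[\ell]{d-1}$, construct a geometrically decaying eigenfunction supported on $\mathcal{C}\sqcup\mathcal{C}^{-1}$ (your explicit power-of-$\mu$ assignment is, up to a global scalar $\mu^{\ell-1}$, exactly the function the paper builds recursively), verify the closure around the loop using $\mu^\ell=d-1$ and condition 3 only at $c_1$, and then invoke Corollary \ref{cor:g} for the even case and linear independence across distinct cycles for the multiplicity bounds.
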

 
 \begin{proof}Fix a cycle $\{c_1,\ldots,c_\ell\}$ in $G$ and let $\mathcal{C}\coloneqq\{[c_1, c_2],\ldots,[c_\ell, c_1]\}$, $\mathcal{C}^{-1}\coloneqq\{[c_1, c_\ell],\ldots,[c_2, c_1]\}$ be the corresponding cycles in $\mathcal{G}$. Let $c_0 \coloneqq c_\ell$ and $c_{\ell+1} \coloneqq c_1$, and assume that $\deg c_1=d>2$, while $\deg c_i=2$ for $i\in \{2,\ldots,\ell\}$. The proof is similar to the one of Theorem \ref{thm:dcycle}. In this case, we can apply Lemma \ref{lemma:conditions} for $\mu=\sqrt[\ell]{d-1}$ if we find a non-zero function $f:\mathcal{V}\rightarrow \mathbb{R}$ whose support is contained in $\mathcal{C}\sqcup \mathcal{C}^{-1}$ and such that the following conditions hold:
\begin{enumerate}
    \item \begin{equation*}
    f([c_{\ell}, c_1])=\frac{\sqrt[\ell]{d-1}}{d-1}\cdot f([c_1, c_2]);
\end{equation*}
 \item \begin{equation*}
    f([c_{i-1}, c_i])=\sqrt[\ell]{d-1}\cdot f([c_i, c_{i+1}]), \quad \forall i\in\{2,\ldots,\ell\};
\end{equation*}
    \item \begin{equation*}
    f([c_2, c_1])=\frac{\sqrt[\ell]{d-1}}{d-1}\cdot f([c_1, c_\ell]);
\end{equation*}
 \item \begin{equation*}
    f([c_{i+1}, c_i])=\sqrt[\ell]{d-1}\cdot f([c_i, c_{i-1}]), \quad \forall i\in\{2,\ldots,\ell\};
\end{equation*}
\item 
\begin{equation*}
       f([c_1, c_2])+f([c_1, c_\ell])=0.
   \end{equation*} \end{enumerate}
 
 By letting
 \begin{align*}
      f([c_1, c_2])&\coloneqq-f([c_1, c_\ell])\coloneqq1,\\
       f([c_{\ell}, c_1])&\coloneqq-f([c_2, c_1])\coloneqq\frac{\sqrt[\ell]{d-1}}{d-1},\\
       f([c_{\ell-1}, c_\ell])&\coloneqq\sqrt[\ell]{d-1}\cdot f([c_{\ell}, c_1]),\\
       &\,\vdots\\
        f([c_2, c_3])&\coloneqq\sqrt[\ell]{d-1}\cdot f([c_3, c_4]),\\
        f([c_3, c_2])&\coloneqq\sqrt[\ell]{d-1}\cdot f([c_2, c_1]),\\
        &\,\vdots\\
        f([c_\ell, c_{\ell-1}])&\coloneqq\sqrt[\ell]{d-1}\cdot f([c_{\ell-1}, c_{\ell-2}]),
 \end{align*}then clearly conditions 1, 2 and 5 above are satisfied, as well as condition 2 and for $i\in \{3,\ldots,\ell\}$, and condition 4 for $i\in\{2,\ldots,\ell-1\}$. Moreover, since
 \begin{align*}
 \sqrt[\ell]{d-1}\cdot f([c_2, c_3])&=\sqrt[\ell]{d-1}\cdot\sqrt[\ell]{d-1}\cdot f([c_3, c_4])\\ &= (\sqrt[\ell]{d-1})^{\ell-1}\cdot f([c_{\ell}, c_1])\\
 &=(\sqrt[\ell]{d-1})^{\ell-1}\cdot \frac{\sqrt[\ell]{d-1}}{d-1}\\
 &=1=f([c_1, c_2]),
 \end{align*}the second condition is satisfied also for $i=2$. Similarly, since
 \begin{equation*}
    \sqrt[\ell]{d-1}\cdot f([c_\ell, c_{\ell-1}])=(\sqrt[\ell]{d-1})^{\ell-1}\cdot f([c_2, c_1])=-1=f([c_1, c_\ell]),
\end{equation*}the fourth condition is satisfied also for $i=\ell$. By using the same method as in the proof of Theorem \ref{thm:dcycle}, this proves the claim.
  \end{proof}

  The previous results tell us that, for certain families of simple chordless cycles $\mathcal{C}$ in $\mathcal{G}$, the presence of such cycles produces an eigenvalue of the form $1-\frac{1}{\mu}$ for $\mathcal{L}$, where $\mu$ is a positive real number that depends on the cycle structure. Moreover, this eigenvalue admits eigenfunctions whose support is contained in $\mathcal{C}\sqcup\mathcal{C}^{-1}$. A natural question is whether this can be generalized to all cycles. The answer is no, as shown by the following result, which gives a complete characterization of the cycles for which it is possible.
  
  \begin{theorem}
  Assume that $G$ is not a cycle graph. Let $\{c_1,\ldots,c_\ell\}$ be a simple chordless cycle in $G$ and let $$\mathcal{C}\coloneqq\{[c_1, c_2],\ldots,[c_\ell, c_1]\}, \quad \mathcal{C}^{-1}\coloneqq\{[c_1, c_\ell],\ldots,[c_2, c_1]\}$$ be the corresponding cycles in $\mathcal{G}$. Then, $\mathcal{L}$ admits an eigenpair of the form $(1-\frac{1}{\mu},f)$, where $\mu\in \mathbb{R}_+$ and $f:\mathcal{V}\rightarrow \mathbb{R}$ is a non-zero function with support in $\mathcal{C}\sqcup\mathcal{C}^{-1}$, if and only if 
\begin{equation*}
         \mu=\sqrt[\ell]{(\deg c_1-1)\cdot (\deg c_2-1)\cdots (\deg c_\ell-1)}
     \end{equation*}and, up to re-labeling the vertices of the cycle\footnote{After re-labeling, we still want $\{c_1,\ldots,c_\ell\}$ to be a cycle. Hence, for instance, if $c_4$ becomes $c_2$, then $c_5$ can only become either $c_1$ or $c_3$.}, $\deg c_1>2$ and, for all $i\in\{2,\ldots,\ell\}$ such that $\deg c_i>2$,

     \color{black}
      \begin{align}
      \nonumber & \frac{\mu}{\deg c_{i+1}-1}\cdot \frac{\mu}{\deg c_{i+2}-1}\cdots  \frac{\mu}{\deg c_{\ell}-1}\cdot \frac{\mu}{\deg c_{1}-1}\\
      &=\frac{\mu}{\deg c_{i-1}-1}\cdot \frac{\mu}{\deg c_{i-2}-1}\cdots \frac{\mu}{\deg c_2-1}\cdot \frac{\mu}{\deg c_{1}-1}.\label{cond:3b}
   \end{align}
  \end{theorem}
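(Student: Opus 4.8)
The plan is to apply Lemma \ref{lemma:conditions} with the eigenvalue $1-\frac{1}{\mu}$ (i.e.\ the top sign, so the $\pm$ in its first two conditions becomes a $+$) and translate the three resulting conditions into constraints on the values of $f$ along the two oriented cycles. I would abbreviate the values on $\mathcal{C}$ and $\mathcal{C}^{-1}$ by $a_i \coloneqq f([c_i, c_{i+1}])$ and $b_i \coloneqq f([c_i, c_{i-1}])$, with indices read cyclically modulo $\ell$, and set $\gamma_i \coloneqq \frac{\mu}{\deg c_i - 1}$. In this notation the first two conditions of Lemma \ref{lemma:conditions} become the recurrences $a_{i-1} = \gamma_i a_i$ and $b_{i+1} = \gamma_i b_i$, while the third becomes the \emph{gluing condition} $a_i + b_i = 0$ at every vertex with $\deg c_i > 2$. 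Since $f$ has support in $\mathcal{C}\sqcup\mathcal{C}^{-1}$ and vanishes elsewhere, these are exactly the equations defining an eigenpair.

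First I would solve the recurrences. As all ratios $\gamma_i$ are strictly positive, each sequence $(a_i)$ and $(b_i)$ is geometric, hence either identically zero or nowhere zero; explicitly $a_i = a_\ell \prod_{j>i}\gamma_j$ and $b_i = b_1 \prod_{j<i}\gamma_j$. Closing the recurrence around the cycle and using $a_0=a_\ell$ shows that a nonzero sequence can exist only if $\prod_{j=1}^\ell \gamma_j = 1$, that is, $\mu^\ell = \prod_{j}(\deg c_j - 1)$, which pins down $\mu = \sqrt[\ell]{\prod_j(\deg c_j-1)}$ and, conversely, guarantees the closed forms above are consistent.

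For the \emph{only if} direction I would argue that, since $G$ is not a cycle graph, the chordless cycle contains at least one vertex of degree $>2$ (a chordless cycle all of whose vertices had degree $2$ would be a connected component equal to $G$). Relabelling so that $\deg c_1 > 2$, the gluing condition at $i=1$ reads $a_\ell = -\gamma_1 b_1$; in particular if either free scale $a_\ell$ or $b_1$ vanished then, through this relation and the geometric form, the other would vanish too, so $f\neq \mathbf 0$ forces both nonzero, whence $\mu$ is as claimed. Substituting $a_\ell = -\gamma_1 b_1$ into the gluing condition $a_i + b_i = 0$ at each remaining vertex with $\deg c_i>2$, dividing by $b_1\neq 0$ and cancelling the common factor $\gamma_1$, yields exactly $\prod_{j>i}\gamma_j = \prod_{1<j<i}\gamma_j$, which is condition \eqref{cond:3b}. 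For the \emph{if} direction I would run this backwards: given $\mu$ and \eqref{cond:3b}, set $b_1 = 1$ and $a_\ell = -\gamma_1$, define the two geometric sequences, extend $f$ by $0$ outside $\mathcal{C}\sqcup\mathcal{C}^{-1}$, and verify that $f\neq\mathbf 0$ and that all three conditions of Lemma \ref{lemma:conditions} hold (the recurrences by construction using $\prod_j\gamma_j=1$, the gluing condition at $c_1$ by the choice of scales, and at the other high-degree vertices by reversing the cancellation).

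The main obstacle is the careful bookkeeping of the two oppositely-oriented geometric sequences together with the proof that they must be \emph{simultaneously} nonzero. This step is exactly what makes the hypothesis $\deg c_1 > 2$ genuinely necessary and where ``$G$ is not a cycle graph'' enters: if no cycle vertex had degree $>2$, the gluing conditions would be vacuous, and one could take $a\neq 0$, $b = 0$ to produce the eigenvalue $0$ irrespective of \eqref{cond:3b}, so the stated characterization would fail. Everything else reduces to the routine verification that \eqref{cond:3b} is the precise algebraic residue of the gluing conditions after the ratio $a_\ell/b_1$ is fixed at the distinguished vertex $c_1$.
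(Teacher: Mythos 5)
Your proposal is correct and follows essentially the same route as the paper: both apply Lemma \ref{lemma:conditions}, solve the two geometric recurrences along $\mathcal{C}$ and $\mathcal{C}^{-1}$, close them around the cycle to force $\mu^{\ell}=\prod_{i}(\deg c_i-1)$, and translate the remaining gluing conditions at vertices of degree $>2$ into \eqref{cond:3b}. The only cosmetic difference is that you carry two free scales $a_\ell$, $b_1$ and prove both must be nonzero, whereas the paper normalizes $f([c_1,c_2])=1$ at the outset and writes out all values of $f$ explicitly.
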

\begin{proof}
Since $G$ is not a cycle graph, then up to re-labeling the vertices of the cycle, we can assume that $\deg c_1>2$. In this case, $(1-\frac{1}{\mu},f)$ is an eigenpair for $\mathcal{L}$ if and only if they satisfy the conditions of Lemma \ref{lemma:conditions}. In particular, it is clear by these conditions that $f([c_1, c_2])\neq 0$, therefore up to normalizing $f$ we can assume that $$f([c_1, c_2])=1.$$ The third condition of Lemma \ref{lemma:conditions} applied to $i=1$ then gives
 \begin{equation*}
     f([c_1, c_\ell])=-1,
 \end{equation*}while the first condition applied to $i\neq 2$ gives
 \begin{align*} 
       f([c_{\ell}, c_1])&=\frac{\mu}{\deg c_1-1}\cdot f([c_1, c_2])=\frac{\mu}{\deg c_1-1},\\
       f([c_{\ell-1}, c_\ell])&=\frac{\mu}{\deg c_\ell-1}\cdot f([c_\ell, c_1])=\frac{\mu}{\deg c_\ell-1}\cdot \frac{\mu}{\deg c_1-1},\\
       &\,\vdots\\
       f([c_2, c_3])&=\frac{\mu}{\deg c_3-1}\cdot f([c_3, c_4])=\frac{\mu}{\deg c_3-1}\cdots\frac{\mu}{\deg c_\ell-1} \cdot  \frac{\mu}{\deg c_1-1},
 \end{align*}
 and the second condition of Lemma \ref{lemma:conditions} applied to $i\neq \ell$ gives
 \begin{align*} 
       f([c_2, c_1])&=\frac{\mu}{\deg c_1-1}\cdot f([c_1, c_\ell])=-\frac{\mu}{\deg c_1-1},\\
       f([c_3, c_2])&=\frac{\mu}{\deg c_2-1}\cdot f([c_2, c_1])=-\frac{\mu}{\deg c_2-1}\cdot \frac{\mu}{\deg c_1-1},\\
       &\,\vdots\\
       f([c_\ell, c_{\ell-1}])&=\frac{\mu}{\deg c_{\ell-1}-1}\cdot f([c_{\ell-1}, c_{\ell-2}])=-\frac{\mu}{\deg c_{\ell-1}-1}\cdot \frac{\mu}{\deg c_{\ell-2}-1}\cdots \frac{\mu}{\deg c_1-1}.
 \end{align*}In particular, the above conditions give a complete description of $f$, given $\mu$. We now check the other conditions of Lemma \ref{lemma:conditions}. We observe that:
 \begin{enumerate}
     \item The first condition of Lemma \ref{lemma:conditions} is satisfied for all $i\in \{1,\ldots,\ell\}$ if and only if
     \begin{align*}
         1&=f([c_1, c_2])=\frac{\mu}{\deg c_2-1}\cdot f([c_2, c_3])\\ &=\frac{\mu}{\deg c_2-1}\cdot\frac{\mu}{\deg c_3-1}\cdots\frac{\mu}{\deg c_\ell-1} \cdot  \frac{\mu}{\deg c_1-1},
     \end{align*}i.e., if and only if 
     \begin{equation*}
         \mu=\sqrt[\ell]{(\deg c_1-1)\cdot (\deg c_2-1)\cdots (\deg c_\ell-1)};
     \end{equation*}
     \item The second condition of Lemma \ref{lemma:conditions} is satisfied for all $i\in \{1,\ldots,\ell\}$ if and only if
     \begin{align*}
         -1&=f([c_1, c_\ell])=\frac{\mu}{\deg c_\ell-1}\cdot f([c_\ell, c_{\ell-1}])\\
         &=-\frac{\mu}{\deg c_\ell-1}\cdot\frac{\mu}{\deg c_{\ell-1}-1}\cdot \frac{\mu}{\deg c_{\ell-2}-1}\cdots \frac{\mu}{\deg c_1-1},
     \end{align*}i.e., as before, if and only if
     \begin{equation*}
         \mu=\sqrt[\ell]{(\deg c_1-1)\cdot (\deg c_2-1)\cdots (\deg c_\ell-1)};
     \end{equation*}
     \item The third condition of Lemma \ref{lemma:conditions} is satisfied if and only if, for all $i\in\{2,\ldots,\ell\}$ such that $\deg c_i>2$,
     \begin{equation}\label{cond:3}
       f([c_{i}, c_{i+1}])=-f([c_{i}, c_{i-1}]),
   \end{equation}where
   \begin{equation*}
       f([c_{i}, c_{i+1}])=\frac{\mu}{\deg c_{i+1}-1}\cdot \frac{\mu}{\deg c_{i+2}-1}\cdots  \frac{\mu}{\deg c_{\ell}-1}\cdot \frac{\mu}{\deg c_{1}-1}
   \end{equation*}and
    \begin{equation*}
       -f([c_i, c_{i-1}])=\frac{\mu}{\deg c_{i-1}-1}\cdot \frac{\mu}{\deg c_{i-2}-1}\cdots \frac{\mu}{\deg c_1-1},
   \end{equation*}therefore \eqref{cond:3} can be rewritten as
   \begin{equation*}
       \frac{\mu}{\deg c_{i+1}-1}\cdot \frac{\mu}{\deg c_{i+2}-1}\cdots  \frac{\mu}{\deg c_{\ell}-1}=\frac{\mu}{\deg c_{i-1}-1}\cdot \frac{\mu}{\deg c_{i-2}-1}\cdots \frac{\mu}{\deg c_2-1}.
   \end{equation*}
 \end{enumerate}
 Putting everything together, we have that $1-\frac{1}{\mu}$ is an eigenvalue with an eigenfunction whose support is contained in $\mathcal{C}\sqcup \mathcal{C}^{-1}$ if and only if \begin{equation*}
         \mu=\sqrt[\ell]{(\deg c_1-1)\cdot (\deg c_2-1)\cdots (\deg c_\ell-1)}
     \end{equation*}and \eqref{cond:3b} is satisfied for all $i\in\{2,\ldots,\ell\}$ such that $\deg c_i>2$. \end{proof}

\section{Cospectrality}\label{section:cospectrality}

We now discuss cospectrality with respect to several matrices associated with the same graph.  As before, for simplicity, given a simple graph $G=(V,E)$, we let $\mathcal{G}=(\mathcal{V},\mathcal{E})$ denote its non-backtracking graph, let $\mathcal{L}$ be its non-backtracking Laplacian, and let $\mathcal{A}$ be the transpose of the non-backtracking matrix of $G$.

Given two undirected graphs, they are said to be \emph{$X$--cospectral}, \emph{$X$-cospectral mates} or simply \emph{$X$-mates} if the eigenvalue spectrum of their respective matrices $X$ is the same, including multiplicities.
For example, two graphs are $A$--cospectral if the eigenvalues of their adjacency matrices are the same, or $\mathcal{L}$--cospectral if the eigenvalues of their non-backtracking Laplacians are the same.
If a graph has no $X$--mate, it is said to be \emph{determined by its $X$--spectrum}.

Cospectrality with respect to the adjacency matrix $A$ has a long history, see \cite{van2009developments} and Chapter 14 of \cite{brouwer} for extensive reviews and for instance \cite{Thune12} for further results. 
In particular, it is known that almost all trees admit an $A$--mate \cite{schwenk1973almost}, meaning that as the number of nodes increases, the fraction of trees that admit an $A$--mate goes to one.
The case of non-trees remains an open question though it is conjectured that the opposite holds: that the fraction of non-tree graphs determined by their $A$--spectrum goes to one as the number of nodes increases \cite{van2009developments}.
Constructions for $A$--cospectral graphs are well known \cite{godsil1982constructing,brouwer}, as are results on graphs determined by their $A$--spectrum and that of their complement \cite{wang2006sufficient}.

Cospectrality with respect to the non-backtracking matrix $\mathcal{A}$ is less well-understood.
First, recall that the $\mathcal{A}$ eigenvalues of a tree are all zero.
In this sense, $\mathcal{A}$ is worse than $A$ at distinguishing trees based on the eigenvalue spectrum, as the fraction of trees with an $\mathcal{A}$--cospectral mate is always equal to one.
However, it is also known \cite{durfee2015distinguishing} that among graphs with minimum degree $\ge 2$ and at most $11$ nodes, the number that admits an $\mathcal{A}$--mate is considerably smaller than the number of such graphs that admit an $A$--mate.\footnote{In Table 5.2 in \cite{durfee2015distinguishing}, the columns labeled $A$, $Z$ correspond to graphs determined by their $A$--spectrum and $\mathcal{A}$--spectrum, respectively.}
This has led to the conjecture that almost all graphs with minimum degree $\ge 2$ are determined by their $\mathcal{A}$--spectrum and, more strongly, that almost all graphs that admit an $A$--mate do not admit an $\mathcal{A}$--mate.
Importantly, recall that when the number of nodes grows large, almost all graphs have minimum degree $\ge 2$.

The strongest evidence for these conjectures comes from exhaustive computations of cospectrality among all graphs with a small number of nodes.
Here we present similar calculations involving the number of graphs not determined by their spectrum of their $A$, $L$, $\mathcal{A}$, and $\mathcal{L}$ matrices.
In this regard our results extend those of \cite{brouwer2009cospectral,durfee2015distinguishing}.
To the best of our knowledge, this is the first time that cospectrality with respect to $L$ has been studied in this exhaustive way, though specific constructions are known \cite{butler2011construction}.
At the time of writing, the results in \cite{brouwer2009cospectral,durfee2015distinguishing}, and the present work provide the most complete picture of the number of cospectral graphs with small number of nodes.
See also relevant entries in the Online Encyclopedia of Integer Sequences \cite{A006611,A006608,A006610,A099881}, and references therein.

\subsection{Computational details}
For the sake of reproducibility, we detail the procedure followed to obtain the computational results.

Among all unlabeled graphs with a number of nodes $4 \le N \le 10$, we find the graphs with a cospectral mate with respect to the matrices $A$, $L$, $\mathcal{A}$, $\mathcal{L}$ via direct computation of their spectra and direct comparison.
The graphs were generated using the software package \cite{nauty}, and their spectrum was computed using standard software routines in the python programming language. Six decimal places of precision were used.

For a graph $G=(V, E)$ with non-backtracking graph $\mathcal{G}=(\mathcal{V}, \mathcal{E})$, we define the diagonal matrix $\tilde{\mathcal{D}}$ via  $\tilde{\mathcal{D}}_{vv} \coloneqq 1 / \deg v$ when $\deg v > 0$, and $\tilde{\mathcal{D}}_{vv} \coloneqq 0$ when $\deg v = 0$, for each $v \in \mathcal{V}$.
Additionally, define $\tilde{\mathcal{L}} \coloneqq \tilde{D} \mathcal{A}$.
This allows us to compute a non-backtracking Laplacian when $G$ has nodes of degree $1$.
Note if the graph has minimum degree $\ge 2$, we have $\tilde{\mathcal{D}} = \mathcal{D}^{-1}$ and $\tilde{\mathcal{L}} = \mathcal{L}$.
Before computing $\mathcal{A}$ or $\mathcal{L}$ of a graph, any nodes of degree zero are removed as they do not effect the spectrum (or indeed the size) of these two matrices

Note that all graphs with $M=0$ edges and any number of nodes can be considered cospectral to each other with respect to the non-backtracking operators $\mathcal{A}$ and $\tilde{\mathcal{L}}$ as in this case both matrices have size $0 \times 0$.
Additionally, graphs with $M=1$ are also $\mathcal{A}$--cospectral and $\tilde{\mathcal{L}}$--cospectral since in this case both matrices are $2 \times 2$ zero matrices. 
Other than these exceptions, there are no cospectral graphs with $N < 4$ with respect to any of the matrices considered here.

\subsection{Results}
Our calculations on the number of $L$--cospectral and $\mathcal{L}$--cospectral graphs with small number of nodes present evidence to the effect that $L$ has nicer cospectrality properties than $A$.
Similarly, our results show that $\mathcal{L}$ has nicer properties than $\mathcal{A}$.
These claims will be made clear shortly.

Before introducing the results, we note an advantage that $\mathcal{L}$ has over $\mathcal{A}$.
The Ihara-Bass determinant formula \cite{angel2015non,bass1992ihara,hashimoto1989zeta} states
\begin{equation*}
\det \left( I - t \mathcal{A} \right) = \left( 1 - t^2 \right)^{M - N} \det \left( I - t A - t^2\left( D - I \right) \right).
\end{equation*}
In other words, the reciprocals of the eigenvalues of $\mathcal{A}$ are the roots of the polynomial in the right hand side.
This formula states that at least $2M - 2N$ of the eigenvalues of $\mathcal{A}$ are equal to $\pm 1$, meaning the ``bulk'' of the spectrum of $\mathcal{A}$, and the most informative part, is comprised of only $2N$ complex numbers.
On the other hand, there is no such formula for $\mathcal{L}$.
In fact, experimentally we have observed that many graphs have $2M$ distinct eigenvalues for $\mathcal{L}$.
Thus we may expect the spectrum of $\mathcal{L}$ to be in general more expressive than that of $\mathcal{A}$.\newline

Table \ref{tab:graphs} shows the number of graphs that are not determined by their spectrum.
The overall best matrix (i.e.\ the one with fewest such graphs) is $L$ starting at $N \ge 8$.
We note that $A$ and $\mathcal{A}$ seem to have the same order of magnitude of number of graphs not determined by their spectrum, as do $L$ and $\tilde{\mathcal{L}}$, and that there is a similar difference when comparing $A$ to $L$ as when comparing $\mathcal{A}$ to $\tilde{\mathcal{L}}$.
Similar computations have been reported \cite{durfee2015distinguishing} that include the Kirchhoff Laplacian and the signless Laplacian, which have not been considered in this work.
We note that the normalized Laplacian $L$ used here admits fewer graphs with cospectral mates than these two other matrices.

\begin{table}[h]
\centering
\begin{tabular}{r|r|rr|rr}
\toprule
$N$ &    \#graphs &      $A$ &    $L$ &  $\mathcal{A}$ &  $\tilde{\mathcal{L}}$ \\
\midrule
$\le$4  &     17 &          0 &       4 &            4 &           4 \\
5  &          34 &          2 &      12 &           11 &           8 \\
6  &         156 &         10 &      32 &           57 &          26 \\
7  &       1 044 &        110 &     108 &          363 &         100 \\
8  &      12 346 &      1 722 &     413 &        3 760 &         574 \\
9  &     274 668 &     51 039 &   1 824 &       64 221 &       4 622 \\
10 &  12 005 168 &  2 560 606 &  26 869 &    1 936 969 &      57 356 \\
\midrule
total & 12 293 427 & 2 613 489 & 29 262 & 2 005 385 & 62 690 \\
\bottomrule
\end{tabular}
\caption{Graphs not determined by their spectrum, by number of nodes $N$.}
\label{tab:graphs}
\end{table}

To elucidate how much the numbers in Table \ref{tab:graphs} are influenced by the existence of nodes of degree $1$,
Table \ref{tab:md2} shows the number of graphs with minimum degree $\ge 2$ that are not determined by their spectrum.
Recall in this case $\tilde{\mathcal{L}} = \mathcal{L}$.
In this data set, the utility of the non-backtracking operators $\mathcal{A}$ and $\tilde{\mathcal{L}}$ is clear, and it points to the fact that the vast majority of graphs in Table \ref{tab:graphs} that are not distinguished by the spectrum of their non-backtracking operators is due to the existence of nodes of degree $1$.
In contrast, the number of graphs not determined by their $A$--spectrum or $L$--spectrum is comparable when considering all graphs versus graphs with minimum degree $\ge 2$.
Furthermore, in Table \ref{tab:md2}, the number of graphs with $\mathcal{L}$--mates is orders of magnitude smaller than all the other matrices studied in this work and other works, and the smallest graphs with an $\mathcal{L}$--mate are larger than the smallest graphs with a $X$--mate, for $X \in \{A, L, \mathcal{A}\}$.\newline

\begin{table}[h]
\centering
\begin{tabular}{r|r|rr|rr}
\toprule
$N$ &  \#graphs &      $A$ &    $L$ &  $\mathcal{A}$ &  $\tilde{\mathcal{L}}$ \\
\midrule
$\le$6  &         76 &          0 &       2 &               0 &              0 \\
7  &        510 &         26 &       4 &               0 &              0 \\
8  &      7 459 &        744 &      11 &               2 &              0 \\
9  &    197 867 &     32 713 &     243 &               6 &              0 \\
10 &  9 808 968 &  1 976 884 &  16 114 &          10 130 &            156 \\
\midrule
total & 10 014 880 & 2 010 367 & 16 374 & 10 138 & 156 \\
\bottomrule
\end{tabular}
\caption{Graphs with minimum degree $\ge 2$ not determined by their spectrum, by number of nodes $N$.}
\label{tab:md2}
\end{table}

Another interesting feature of cospectrality is the size of each cospectrality class.
Given a graph $G$, the size of the $X$--cospectrality class of $G$ is the number of graphs that are $X$--cospectral to it.
The size of a cospectrality class can be arbitrarily large, and large classes usually point to the failure of the spectrum of some matrix as a useful way to distinguish between graphs.
For example, as mentioned earlier, all trees with the same number of edges and any number of nodes are in the same cospectrality class with respect to $\mathcal{A}$ and $\tilde{\mathcal{L}}$.
Table \ref{tab:pairs} shows the percentage of graphs not determined by their spectra whose cospectrality class has size two, i.e.\ the graphs that have exactly one distinct mate, among all graphs and among graphs with minimum degree $\ge 2$.
Interestingly, we see that $A$ and $\mathcal{A}$ have a decreasing tendency in the number of classes of size two, while $L$ and $\mathcal{L}$ show an increasing trend.
Once again, the utility of $\mathcal{L}$ is clear: every single graph with up to $N=10$ nodes and minimum degree $\ge 2$ that has a $\mathcal{L}$--mate has the fewest number of such mates, namely exactly one.\newline

\begin{table}
\centering
\begin{tabular}{r|rrrr|rrrr}
\toprule
{} & \multicolumn{4}{c|}{All} & \multicolumn{4}{c}{Min. deg. $\ge 2$} \\
$N$ &     $A$ &    $L$ & $\mathcal{A}$ & $\tilde{\mathcal{L}}$ &               $A$ &     $L$ & $\mathcal{A}$ & $\tilde{\mathcal{L}}$ \\
\midrule
4  &     --- &   0.00 &    100.00 &    100.00 &       --- &     --- &       --- &    --- \\
5  &  100.00 &  50.00 &     72.73 &     25.00 &       --- &     --- &       --- &    --- \\
6  &  100.00 &  56.25 &     35.09 &     38.46 &       --- &  100.00 &       --- &    --- \\
7  &   94.55 &  48.15 &     16.53 &     46.00 &    100.00 &  100.00 &       --- &    --- \\
8  &   89.55 &  60.53 &      4.15 &     56.10 &     94.62 &   72.73 &    100.00 &    --- \\
9  &   82.39 &  68.64 &      1.62 &     65.82 &     84.17 &   98.77 &    100.00 &    --- \\
10 &   78.37 &  91.48 &      1.31 &     74.99 &     79.33 &   99.59 &     99.88 & 100.00 \\
\bottomrule
\end{tabular}
\caption{Percentage of graphs not determined by their spectrum whose cospectrality class has size $2$, by number of nodes $N$.}
\label{tab:pairs}
\end{table}

In the case of the non-backtracking operators $\mathcal{A}$ and $\tilde{\mathcal{L}}$, Tables \ref{tab:graphs}--\ref{tab:pairs} show the number of graphs with the \emph{same number of nodes and edges} that are $\mathcal{A}$--cospectral or $\tilde{\mathcal{L}}$--cospectral.
This is due to the fact that the size of $\mathcal{A}$ and $\tilde{\mathcal{L}}$ depends only on the number of edges.
It is also possible that there exist graphs with the same number of edges but different number of nodes that are cospectral with respect to these non-backtracking operators. Note \cite{durfee2015distinguishing} argue that if the graph has minimum degree at least $2$, then the spectrum determines both the number of nodes and edges.
Nevertheless, Table \ref{tab:byedge} shows such instances.
This table shows that among graphs with $4 \le N \leq 10$ and minimum degree $\ge 2$, no graphs with different number of nodes are $\mathcal{A}$--cospectral or $\mathcal{L}$--cospectral (compare the totals between Tables \ref{tab:graphs}--\ref{tab:byedge}).
However, there are many graphs with nodes of degree less than $2$ that are cospectral to some other graph with a different number of nodes.
This should not be a surprise; for instance, consider any graph $G$ and define $G'$ as the disjoint union of $G$ with the singleton graph.
Then, $G$ and $G'$ are both $\mathcal{A}$--cospectral and  $\tilde{\mathcal{L}}$--cospectral (the respective matrices from each graph are in fact the same matrix).
Whether such trivial cases are a complete explanation of the numbers seen in Table \ref{tab:byedge} remains an open question.
Finally, we point out that the number of $\mathcal{L}$--cospectral graphs as a function of the number of edges $M$ is in progression: $4, 8, 16, 24, 26, 26, 14, 16, 8, 4$; see right-most column of Table \ref{tab:byedge}.
We hypothesize this is not an accident but part of a larger pattern, though further research is needed to establish it fully.

\begin{table}
\centering
\small
\begin{tabular}{r|r|rr|r|rr}
\toprule
{} & \multicolumn{3}{c|}{All} & \multicolumn{3}{c}{Min. deg. $\ge 2$} \\
$M$ &    \#graphs &       $\mathcal{A}$ &      $\tilde{\mathcal{L}}$ &             \#graphs &     $\mathcal{A}$ &   $\tilde{\mathcal{L}}$ \\
\midrule
0  &          7 &        7 &       7 &           0 &     0 &   0 \\
1  &          7 &        7 &       7 &           0 &     0 &   0 \\
2  &         14 &       14 &      14 &           0 &     0 &   0 \\
3  &         32 &       32 &      32 &           0 &     0 &   0 \\
4  &         60 &       60 &      60 &           1 &     0 &   0 \\
5  &        118 &      118 &     118 &           2 &     0 &   0 \\
6  &        254 &      253 &     254 &           6 &     0 &   0 \\
7  &        521 &      517 &     521 &          10 &     0 &   0 \\
8  &      1 117 &    1 079 &   1 117 &          25 &     0 &   0 \\
9  &      2 429 &    2 246 &   2 424 &          68 &     0 &   0 \\
\midrule
10 &      5 233 &    4 633 &   5 157 &         182 &     0 &   0 \\
11 &     11 148 &    9 930 &  10 483 &         532 &     0 &   0 \\
12 &     23 215 &   20 744 &  18 860 &       1 679 &     0 &   0 \\
13 &     46 439 &   40 831 &  29 681 &       5 218 &     4 &   0 \\
14 &     88 645 &   74 294 &  41 833 &      15 437 &    14 &   0 \\
15 &    159 965 &  123 304 &  53 790 &      41 126 &    26 &   0 \\
16 &    270 897 &  184 297 &  63 814 &      96 274 &    62 &   0 \\
17 &    428 559 &  246 821 &  70 080 &     197 433 &   162 &   0 \\
18 &    630 899 &  295 705 &  71 155 &     355 986 &   364 &   4 \\
19 &    861 535 &  317 166 &  66 368 &     567 827 &   634 &   8 \\
\midrule
20 &  1 089 368 &  305 084 &  56 680 &     807 284 &   983 &  16 \\
21 &  1 273 438 &  263 655 &  44 169 &   1 029 639 & 1 329 &  24 \\
22 &  1 374 523 &  205 247 &  31 362 &   1 184 688 & 1 492 &  26 \\
23 &  1 368 996 &  144 209 &  20 305 &   1 235 599 & 1 490 &  26 \\
24 &  1 257 395 &   91 728 &  12 029 &   1 172 658 & 1 333 &  24 \\
25 &  1 064 416 &   52 858 &   6 525 &   1 015 663 &   989 &  16 \\
26 &    830 367 &   27 717 &   3 284 &     804 863 &   628 &   8 \\
27 &    596 963 &   13 319 &   1 547 &     584 762 &   368 &   4 \\
28 &    395 512 &    5 877 &     691 &     390 136 &   166 &   0 \\
29 &    241 725 &    2 415 &     296 &     239 514 &    60 &   0 \\
\midrule
30 &    136 496 &      948 &     126 &     135 636 &    26 &   0 \\
31 &     71 343 &      351 &      50 &      71 025 &     8 &   0 \\
32 &     34 674 &      126 &      22 &      34 559 &     0 &   0 \\
33 &     15 777 &       48 &      10 &      15 734 &     0 &   0 \\
34 &      6 761 &       18 &       4 &       6 745 &     0 &   0 \\
35 &      2 770 &        7 &       2 &       2 764 &     0 &   0 \\
36 &      1 104 &        4 &       2 &       1 101 &     0 &   0 \\
$\ge$37 &        705 &        0 &       0 &         704 &     0 &   0 \\
\midrule
total & 12 293 427 & 2 435 669 & 612 879 & 10 014 880 & 10 138 & 156 \\
\bottomrule
\end{tabular}
\caption{\small{Graphs with $4 \le N \le 10$ with an $\mathcal{A}$--mate or $\mathcal{L}$--mate, by number of edges $M$, among all graphs and among graphs with minimum degree $\ge 2$.}}
\label{tab:byedge}
\end{table}

Let us now consider the $78$ pairs of $\mathcal{L}$--mates with $N=10$; Figures \ref{fig:mates1} and \ref{fig:mates2}  show the smallest such pairs.
Our calculations confirm that each of these $78$ pairs of $\mathcal{L}$--mates is also a pair of $A$--mates, $L$--mates, and $\mathcal{A}$--mates, i.e.\ they cannot be distinguished using the spectrum of any of the matrices used in this work.
In other words, our computations show that the most effective way to distinguish between two graphs with up to $N \le 10$ nodes and minimum degree $\ge 2$ using cospectrality methods is by using our non-backtracking Laplacian $\mathcal{L}$.\newline

Based on the results shown here, and in parallel to the open conjectures regarding graphs being determined by their $A$--spectrum, we state the following.

\begin{conjecture}
Almost all graphs with minimum degree $\ge 2$ are determined by the spectrum of their non-backtracking Laplacian $\mathcal{L}$.
\end{conjecture}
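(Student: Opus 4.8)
The statement is asymptotic, so the plan is to bound the number of $N$-node graphs with minimum degree $\ge 2$ that admit an $\mathcal{L}$-mate and show it is an $o(1)$ fraction of all such graphs as $N\to\infty$. The natural template is the arithmetic/controllability theory developed for the (generalized) adjacency spectrum, adapted to the non-backtracking setting and to the $P$-self-adjoint structure of Theorem \ref{thm:Psymm}. The first reduction is to pass from spectra to a similarity: if $G$ and $G'$ are $\mathcal{L}$-cospectral, then their transition matrices $\mathcal{T}=\mathcal{D}^{-1}\mathcal{A}$ are cospectral (a shift by $1$), and since minimum degree $\ge 2$ forces the spectrum to record both $N$ and $M$, the matrices $\mathcal{T}(G)$ and $\mathcal{T}(G')$ have the same size $2M\times 2M$. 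On the open, conjecturally dense, set where the spectrum is simple they are conjugate by an invertible $S$, and the goal is to show that for almost all $G$ every such $S$ descends to a genuine graph isomorphism.

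The second step is to control $S$ using the extra rigidity of the edge space. A complication to handle at the outset is that the all-ones vector $\mathbf{1}$ is the $0$-eigenfunction of $\mathcal{L}$ (the transition matrix being row-stochastic), so it is \emph{too} special to serve as a cyclic vector for $\mathcal{T}$; the walk matrix must instead be built from a different, canonically defined seed that is spectrally accessible (for instance the degree/stationary vector), and one calls $G$ \emph{controllable} when this seed is cyclic. For controllable graphs, $S$ is determined by its action on the seed and hence forced to have rational entries; combined with $P$-self-adjointness (Theorem \ref{thm:Psymm}), so that $S$ may be taken to intertwine the two $P$-products, this constrains $S$ to a signed edge-permutation compatible with the head/tail incidence and with the involution $P$. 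The final structural move is to pull such an edge-level map back to a vertex-level isomorphism of $G$, using that the incidence pattern of $\mathcal{A}$ (equivalently the adjacency of $\mathcal{NB}(G)$) reconstructs the underlying simple graph, in the spirit of Theorem \ref{thm:connected} and Lemma \ref{lemma:outC}.

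The third step is an arithmetic criterion ruling out non-permutation similarities. After clearing the denominators produced by the weights $1/(\deg v-1)$, one expects a criterion of the shape: if the walk matrix is invertible and its determinant (taken over the relevant number field) satisfies a squarefree/coprimality condition, then every $\mathcal{L}$-mate of $G$ is isomorphic to $G$. The moments $\mathrm{tr}(\mathcal{T}^k)$, which are spectral invariants counting weighted closed non-backtracking walks, furnish the bridge between the spectrum and these arithmetic data; Lemma \ref{lem:bp-spectrum} already exhibits the integral, degree-indexed arithmetic lurking in the $\mathcal{A}P$ spectrum, and the cycle signatures of Theorems \ref{thm:dcycle} and \ref{thm:d} show concretely how degree and cycle information is encoded spectrally.

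The hard part, and the reason this remains a conjecture rather than a theorem, is the final density estimate: one must show that controllability together with the arithmetic criterion holds for a $1-o(1)$ fraction of graphs with minimum degree $\ge 2$. Even for the ordinary adjacency matrix the analogous statement is only known to hold for a positive proportion of graphs, with ``almost all'' still open, and the present setting is strictly harder. First, the relevant matrix acts on the $2M$-dimensional oriented-edge space with the rigid $P$ and incidence structure, so the admissible similarities are more constrained but also more delicate to analyze than in the vertex case; and the special role of $\mathbf{1}$ complicates the choice of cyclic vector underlying the whole genericity argument. Second, the degree-dependent weights make the eigenvalues non-integral algebraic numbers, so the clean ``algebraic integers plus congruences'' machinery must be redeveloped over a number field whose arithmetic depends on the degree sequence, and controlling the primes dividing the walk-matrix determinant for random graphs in this ramified setting is exactly where current random-matrix and analytic tools fall short. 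A realistic intermediate target, and the natural first theorem to aim for, is the positive-proportion analog: that a constant fraction of graphs with minimum degree $\ge 2$ are determined by their $\mathcal{L}$-spectrum.
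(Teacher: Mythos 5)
You have not proved the statement, and neither does the paper: in the paper this is explicitly a \emph{conjecture}, supported only by exhaustive computation of cospectral mates among graphs with at most ten nodes (Tables \ref{tab:graphs}--\ref{tab:pairs}), not by any argument. Your proposal is a research program rather than a proof, and you concede this yourself: the final step --- showing that controllability plus the arithmetic criterion holds for a $1-o(1)$ fraction of graphs with minimum degree $\ge 2$ --- is precisely the quantitative content of the conjecture, and you leave it open. An outline whose last step is ``this is the hard part and it is why this remains a conjecture'' cannot be accepted as a proof; the gap is not a technical detail but the entire statement.

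Beyond that, several of your intermediate reductions are themselves unestablished in this setting, so the gap is not confined to the last step. First, passing from $\mathcal{L}$-cospectrality to similarity requires simple spectrum (or control of the Jordan structure) for the non-normal operator $\mathcal{L}$; you invoke a ``conjecturally dense'' set, but density of simple $\mathcal{L}$-spectrum is not known --- the paper only reports the experimental observation that many graphs have $2M$ distinct eigenvalues. Second, the controllability/walk-matrix machinery you want to import is built for symmetric integer matrices, whereas $\mathcal{L}$ is non-symmetric with rational entries $1/(\deg v-1)$ and is only $P$-self-adjoint (Theorem \ref{thm:Psymm}); no analogue of the squarefree-determinant criterion exists here, so your third step is also missing rather than routine. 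Third, even the premise that the $\mathcal{L}$-spectrum of a graph with minimum degree $\ge 2$ determines $N$ (and not merely $M$, which is half the matrix size) needs an argument; the paper cites \cite{durfee2015distinguishing} only for the corresponding fact about $\mathcal{A}$. Finally, your own benchmark cuts against you: for the ordinary adjacency matrix the ``almost all graphs are determined by their spectrum'' assertion is itself a long-standing open conjecture, so even if every lemma in your plan were proved, you would have reduced the problem to something at least as hard as a famous open question. The realistic target you name at the end --- a positive-proportion version --- would be a genuinely different and weaker theorem, not the statement at hand.
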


\begin{conjecture}
Almost all graphs with minimum degree $\ge 2$ with an $\mathcal{A}$--mate have no $\mathcal{L}$--mate.
\end{conjecture}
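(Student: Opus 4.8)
The plan is to recast the asymptotic statement spectrally and then exploit the one structural asymmetry between $\mathcal{A}$ and $\mathcal{L}$ that the excerpt already isolates: $\mathcal{A}$ obeys an Ihara--Bass reduction, while $\mathcal{L}$ does not. Writing $\mathcal{T} = \mathcal{D}^{-1}\mathcal{A}$ for the non-backtracking transition matrix, so that $\mathcal{L} = \id - \mathcal{T}$, two graphs are $\mathcal{L}$--cospectral exactly when all power traces $\operatorname{tr}(\mathcal{T}^k)$ agree, and $\mathcal{A}$--cospectral when all $\operatorname{tr}(\mathcal{A}^k)$ agree. By the Ihara--Bass formula quoted above, the nonzero part of the $\mathcal{A}$--spectrum is pinned down by the degree-$2N$ Bass polynomial $\det(I - tA - t^2(D-I))$, so $\mathcal{A}$--cospectrality is governed by only $\sim 2N$ scalars and forces the remaining $2M-2N$ eigenvalues onto $\pm 1$. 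By contrast, $\operatorname{tr}(\mathcal{T}^k)$ counts closed non-backtracking walks of length $k$ in $G$ weighted by $\prod 1/(\deg_G(\out(e))-1)$ over the traversed edges, and no identity is known collapsing these onto $\sim 2N$ parameters. The goal is to show that this extra, degree-localized information almost always separates the non-isomorphic members of a nontrivial $\mathcal{A}$--cospectrality class.

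First I would make the reduction precise: a graph has an $\mathcal{L}$--mate iff its $\mathcal{L}$--cospectrality class has size $\ge 2$, so it suffices to prove that, as $N \to \infty$, the number of minimum-degree-$\ge 2$ graphs lying in a nontrivial $\mathcal{A}$--class \emph{and} a nontrivial $\mathcal{L}$--class is $o$ of the number lying in a nontrivial $\mathcal{A}$--class. The natural carrier is the set of unordered $\mathcal{A}$--cospectral pairs $\{G,G'\}$, but one first needs that an $\mathcal{L}$--mate of such a $G$ is itself forced to be an $\mathcal{A}$--mate; this is precisely the empirical observation recorded above that every $\mathcal{L}$--mate pair encountered is also an $\mathcal{A}$--mate pair. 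I would try to promote that observation to a theorem for minimum degree $\ge 2$, so that all $\mathcal{L}$--mates live inside $\mathcal{A}$--classes and no cross-class contributions survive. Granting this, for each non-isomorphic $\mathcal{A}$--cospectral pair the task becomes exhibiting a short length $k$ at which $\operatorname{tr}(\mathcal{T}^k) \neq \operatorname{tr}((\mathcal{T}')^k)$.

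The key lemma I would aim for is a \emph{separation} statement. Expanding $\operatorname{tr}(\mathcal{T}^3)$ and $\operatorname{tr}(\mathcal{T}^4)$ as sums over triangles and quadrilaterals of $G$ carrying the vertex weights $1/(\deg_G v - 1)$, one observes that the \emph{unweighted} short-cycle counts are already $\mathcal{A}$--spectral invariants, since they are the low-order coefficients of the Bass polynomial, while the degree-weighted versions are not. Hence any $\mathcal{A}$--cospectral pair whose equal supply of short cycles is distributed over differently-degreed vertices is immediately split by $\mathcal{L}$. The only pairs escaping this are those whose short cycles are identically degree-distributed, and then to all orders; these form the residual set, which I would bound by an enumeration or switching argument and show is a vanishing fraction of all $\mathcal{A}$--cospectral pairs.

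The main obstacle is exactly the feature that motivates the conjecture: the absence of an Ihara--Bass reduction for $\mathcal{L}$ makes it more expressive, but it also denies us any finite closed-form set of invariants determining the $\mathcal{L}$--spectrum, so ``separated at some short $k$'' cannot be read off one polynomial identity and must be forced either through a new determinantal identity for $\det(I - t\mathcal{D}^{-1}\mathcal{A})$ or through asymptotic control of the number of $\mathcal{A}$--cospectral pairs. Since no Godsil--McKay-type switching is known to construct or count $\mathcal{A}$--cospectral (let alone $\mathcal{L}$--cospectral) mates, bounding the degree-homogeneous residual pairs is the crux, and I expect it to be the genuinely hard part of the argument.
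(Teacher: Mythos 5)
First, a point of order: the statement you were asked to prove is presented in the paper as a \emph{conjecture}, stated at the end of Section 6. The paper offers no proof of it; its only support is the exhaustive enumeration of cospectral mates among all graphs on at most $10$ nodes, plus the heuristic that the Ihara--Bass formula collapses the informative part of the $\mathcal{A}$--spectrum onto roughly $2N$ parameters while no analogous reduction is known for $\mathcal{L}$. So there is no proof to compare against, and your proposal --- which is built on exactly that Ihara--Bass asymmetry --- must be judged as a research program. As a program it is sensible, and you correctly identify where the difficulty sits, but each of its three pillars has a genuine gap, so the conjecture remains as open after your argument as before it.

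Concretely: (i) Your reduction to separating graphs \emph{within} an $\mathcal{A}$--cospectrality class requires the implication ``$\mathcal{L}$--cospectral $\Rightarrow$ $\mathcal{A}$--cospectral'' for minimum degree $\ge 2$. The paper verifies this only empirically, for the $78$ pairs of $\mathcal{L}$--mates with $N=10$; promoting it to a theorem is itself an open problem plausibly as hard as the conjecture, and without it, potential $\mathcal{L}$--mates lying outside the $\mathcal{A}$--class are simply unaccounted for, so the reduction fails at its first step. (ii) Your key separation lemma is asserted, not proved: equality of $\operatorname{tr}(\mathcal{T}^k)$ at small $k$ compares \emph{degree-weighted} short-cycle sums, and nothing you say rules out cancellation --- two different assignments of degrees to an equal supply of short cycles can yield equal weighted sums, since the weights $\prod 1/(\deg v-1)$ satisfy many rational relations; ``differently-degreed short cycles $\Rightarrow$ split by some small $k$'' therefore needs an argument you do not supply. (iii) Even granting (i) and (ii), bounding the residual degree-homogeneous pairs requires asymptotic control of the number of $\mathcal{A}$--cospectral pairs, which, as you yourself concede, does not exist: there is no Godsil--McKay-type switching for $\mathcal{A}$, and no enumeration of $\mathcal{A}$--mates is known. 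Since that is precisely where the content of the conjecture lives, the proposal relocates the difficulty rather than resolving it. (A minor, fixable point: comparing power traces only detects cospectrality between matrices of equal size, i.e.\ equal $M$; for minimum degree $\ge 2$ this is recoverable from the $\mathcal{A}$--spectrum, as the paper notes in its discussion of Table 4.)
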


\begin{figure}[t]
    \centering
    \includegraphics[width=9cm]{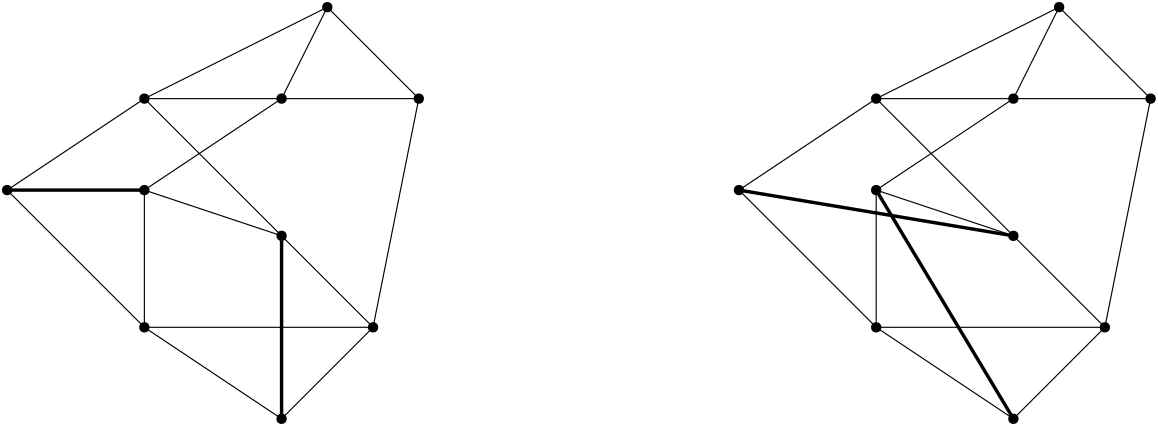}
    \caption{One of the two smallest pairs of $\mathcal{L}$-mates.}
    \label{fig:mates1}
\end{figure}

\begin{figure}[t]
    \centering
    \includegraphics[width=9cm]{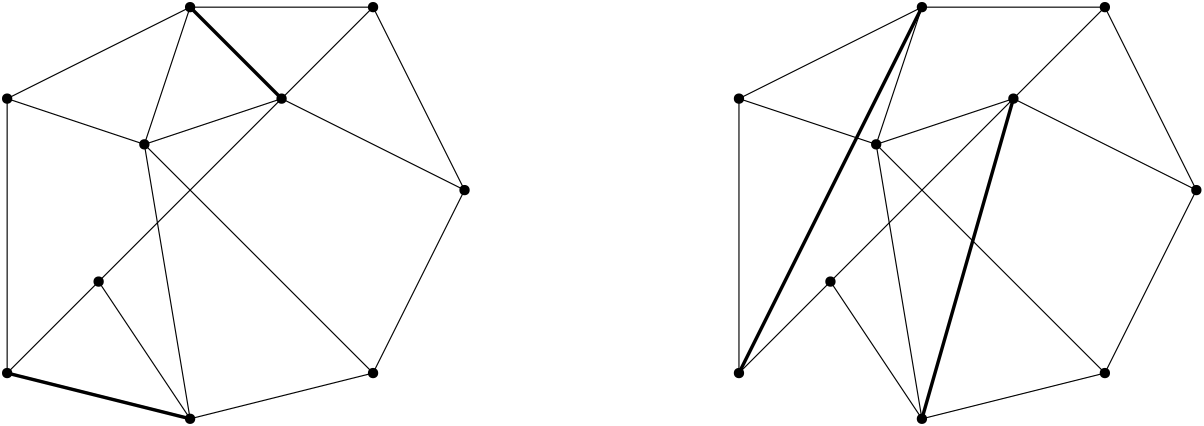}
    \caption{One of the two smallest pairs of $\mathcal{L}$-mates.}
    \label{fig:mates2}
\end{figure}

\section*{Acknowledgments}
Raffaella Mulas was supported by the Max Planck Society's Minerva Grant.

\bibliographystyle{plain} 

\bibliography{NB11-10-22}

\end{document}